\theoremstyle{plain}
\newtheorem{theorem}{Theorem}[section]
\newtheorem{lemma}{Lemma}[section]
\newtheorem{corollary}{Corollary}[section]
\newtheorem{remark}{Remark}[section]
\newcommand{\rn}{\mathbb{R}^{N}}
\newcommand{\hn}{\mathbb{H}^{N}}
\newcommand{\rgt}{\bigg(\frac{\partial u}{\partial r}\bigg)^2}
\newcommand{\rg}{\frac{\partial u}{\partial r}}
\numberwithin{equation}{section} \allowdisplaybreaks
\begin{document}
	
	\title[ Poincar\'e inequalities on Riemannian models]{On some strong Poincar\'e inequalities\\ on Riemannian models and their improvements}
	
	\author[Elvise BERCHIO]{Elvise BERCHIO}
\address{\hbox{\parbox{5.7in}{\medskip\noindent{Dipartimento di Scienze Matematiche, \\
Politecnico di Torino,\\
        Corso Duca degli Abruzzi 24, 10129 Torino, Italy. \\[3pt]
        \em{E-mail address: }{\tt elvise.berchio@polito.it}}}}}

\author[Debdip GANGULY]{Debdip GANGULY}
\address{\hbox{\parbox{5.7in}{\medskip\noindent{Department of Mathematics,\\
 Indian Institute of Technology Delhi,\\
 IIT Campus, Hauz Khas, New Delhi\\
        Delhi 110016, India. \\[3pt]
        \em{E-mail address: }{\tt debdipmath@gmail.com}}}}}
	
	\author[Prasun Roychowdhury]{Prasun Roychowdhury}
	\address{\hbox{\parbox{5.7in}{\medskip\noindent{Department of Mathematics,\\
 Indian Institute of Science Education and Research,\\
 Dr. Homi Bhabha Road, Pashan\\
        Pune 411008, India. \\[3pt]
        \em{E-mail address: }{\tt prasunroychowdhury1994@gmail.com}}}}}

	\subjclass[2010]{46E35, 26D10, 31C12}
	\keywords{Poincar\'e-Hardy inequality, Poincar\'e-Rellich inequality, Hyperbolic Space, Riemannian Model Manifolds}

	\maketitle
	
	\begin{abstract}
		We prove second and fourth order improved Poincar\'e type inequalities on the hyperbolic space involving Hardy-type 
		remainder terms. Since theirs l.h.s. only involve the radial part of the gradient or of the laplacian, they can be seen as stronger versions of the classical Poincar\'e inequality. We show that such inequalities hold true on model manifolds as well, under suitable curvature assumptions and 
		sharpness of some constants is also discussed. 	
	\end{abstract}
	
	\section{Introduction}

	Let $M$ be a Cartan-Hadamard manifold of dimension $N$ (namely, a manifold which is complete, simply-connected, and has everywhere non-positive sectional curvature). It is well known that if the sectional curvatures of $M$ are bounded above by a \it strictly negative \rm constant, then $M$ admits a Poincar\'e type, or $L^2$-gap, inequality, namely there exists $\Lambda>0$ such that
	\begin{equation*}
		\int_M|\nabla_g u|^2\,{\rm d}v_g\ge \Lambda\int_Mu^2\,{\rm d}v_g\ \ \ \forall u\in C_c^\infty(M).
	\end{equation*}

	Recently, there has been a constant effort to improve the Poincar\'e inequality in terms of adding possibly \emph{optimal} Hardy weights, i.e. adding on the r.h.s. a term of the form $\int_M W\,u^2\,{\rm d}v_g$ with $W\geq 0$ ``as large as possible", see \cite{pinch} for a general treatment of Hardy weights for second-order elliptic operators. Starting 
	from the works \cite{AK} and \cite{EDG}, where a Poincar\'e-Hardy inequality was shown with sharp constants, further 
	generalisation to $p$-Laplacian and higher order case have been obtained in \cite{BAGG} and \cite{BG1}, respectively.  
	We also refer to \cite{BGGP} for more general improvements and the study of extremals. The kind of weights obtained in these papers, which are singular at a fixed point of $M$, makes this subject a sort of lateral branch of that very rich field of research originated from the seminal paper \cite{Brezis}, and dealing with possible improvements (not only of $L^2$ type) of the classical Hardy inequality on bounded euclidean domains or on curved spaces, see e.g. \cite{ANR, GFT, Carron, Dambrosio, Kombe1, Kombe2, AKR,KS, Yang} and reference therein.

	\medskip 
	The most studied example of $M$ is clearly the \it hyperbolic space \rm ${\mathbb H}^N$, where $\Lambda=\left( \frac{N-1}{2} \right)^{2}$. In this setting the following improved Poincar\`e inequality was established in \cite{EDG} for all $u\in C_c^\infty(\hn)$:
	\begin{align}
	\label{poin_hardy_sharp}
	& \int_{\hn} |\nabla_{\hn} u|^2 \,  {\rm d}v_{\hn} \geq \left( \frac{N-1}{2} \right)^{2} \int_{\hn} u^2 \, {\rm d}v_{\hn}\\ \notag
	&+ \frac{1}{4} \int_{\hn} \frac{u^2}{r^2} \, {\rm d}v_{\hn}+  \frac{(N-1)(N-3)}{4} \int_{\hn} \frac{u^2}{\sinh^2 r} \ dv_{\hn}\,,
	\end{align}
		where $N\geq 3$, $r:=\rho(x,o)$ denotes the geodesic distance function and $o\in \hn$ is fixed. Moreover, all constants in \eqref{poin_hardy_sharp} are proved to be sharp in a suitable sense.

		 In view of the work \cite{VHN}, where a similar question has been recently posed in the context of Hardy inequalities, one may wonder whether  inequality  \eqref{poin_hardy_sharp} still holds if we replace $ |\nabla_{\hn} u|^2$ with its radial part $(\frac{\partial u}{\partial r})^2$. Since, by Gauss's lemma one has $|\nabla_{\hn} u|^2\geq (\frac{\partial u}{\partial r})^2$, giving an answer to this question is by no means obvious and it represents the first goal of this paper. Indeed, we prove that the following \emph{stronger} version of \eqref{poin_hardy_sharp} holds
		\begin{align}\label{poin_hardy_sharp2}
	& \int_{\hn} \rgt \ {\rm d}v_{\hn} \geq \left( \frac{N-1}{2} \right)^{2} \int_{\hn} u^2 \, {\rm d}v_{\hn}\\ \notag
	&+ \frac{1}{4} \int_{\hn} \frac{u^2}{r^2} \, {\rm d}v_{\hn}+  \frac{(N-1)(N-3)}{4} \int_{\hn} \frac{u^2}{\sinh^2 r} \ dv_{\hn}\,,
	\end{align}	
	for all $u\in C_c^\infty(\hn)$ and $N\geq 3$. Clearly, \eqref{poin_hardy_sharp2} reproves inequality \eqref{poin_hardy_sharp} but we derive it by using a different technique: here the main tool exploited is spherical harmonics decomposition technique, while the proof of \eqref{poin_hardy_sharp} was based on finding a ground state and on criticality theory, see \cite[Theorem~2.1]{EDG}. Furthermore, we argue quite differently also in proving the optimality of the constants appearing in \eqref{poin_hardy_sharp2}. Notice that the sharpness of all constants in  \eqref{poin_hardy_sharp2} can be derived by combining Gauss's lemma with the sharpness of the corresponding constants in \eqref{poin_hardy_sharp}. Nevertheless, in this paper we provide and alternative, and more direct, proof of the sharpness of the dominating term at infinity of  \eqref{poin_hardy_sharp2}, namely of the constant $\frac{1}{4}$, which is based on the delicate construction of a suitable minimising sequence. This argument may have its own interest in the study of related partial differential equations, furthermore it can be carried over to more general Riemannian models having negative sectional curvatures bounded above, see Theorem \ref{mainthm-1} below.

	\medskip
	It is known from \cite{SK} that in the Hyperbolic space there also hold the following second order Poincar\`e inequalities:
	\begin{align}\label{higher_order_poin}
	  \int_{\hn}(\Delta_{\hn} u)^2 {\rm d}v_{\hn}\geq \bigg(\frac{N-1}{2}\bigg)^{2(2-l)} \int_{\hn} |\nabla_{\hn}^l u|^2{\rm d}v_{\hn},
	\end{align}
	for all $u\in C_c^\infty(\hn)$   with $N\geq 3$  and with $l=0$ or $l=1$. We refer to \cite{ngo} for a proof of the sharpness of the above constants. Now, in view of \eqref{higher_order_poin} and for what previously stated in the first order, it is natural to think about possible extensions of  \eqref{poin_hardy_sharp2} to the second order. In this respect the following inequality from \cite[Theorem 5.2]{VHN} turns out to be meaningful:
    \begin{align}\label{base}
 \int_{\mathbb{H}^{N}} (\Delta_{\hn} u)^2  \ dv_{\hn} \geq  \int_{\mathbb{H}^{N}} (\Delta_{r,\hn} u)^2  \ dv_{\hn} 
\end{align}
for all $u\in C_c^\infty(\hn)$  with $N\geq 5$,  where $\Delta_{r,\hn}$ denotes the radial part of the Laplace-Beltrami operator $\Delta_{\hn}$ on $\hn$, see also Lemma \ref{rad_lap} below. Clearly, inequality \eqref{base} suggests that a possible stronger version of \eqref{higher_order_poin} might involve the operator $\Delta_{r,\hn}$.  Motivated by this, in the present paper we prove the following improved Poincar\`e type inequality:
\begin{align}\label{PR2} 
&
    \int_{\hn} (\Delta_{r,\hn} u)^2 \ {\rm d}v_{\hn} \geq \left( \frac{N-1}{2} \right)^{2} \int_{\hn} \rgt \ {\rm d}v_{\hn}  \\ \notag  
    &+\frac{1}{4} \int_{\hn} \frac{1}{r^2}\rgt \ {\rm d}v_{\hn} +\frac{(N^2- 1)}{4}\int_{\hn} \frac{1}{\sinh^2 r} \rgt \ {\rm d}v_{\hn}
     \end{align}
     for all $u\in C_c^\infty(\hn)$   with $N\geq 3$.
     
We notice that the interest of \eqref{PR2} is not only due to the fact that it looks likes the proper second order analogue of \eqref{poin_hardy_sharp2}, but also to the fact that it improves \eqref{higher_order_poin} from several points of view.  In fact, on one hand, by combining \eqref{poin_hardy_sharp2} and a related weighted inequality, from \eqref{PR2} we derive the inequality:
\begin{align}\label{PR4} 
&
    \int_{\hn} (\Delta_{r,\hn} u)^2 \ {\rm d}v_{\hn} \geq \left( \frac{N-1}{2} \right)^{4} \int_{\hn} u^2 \ {\rm d}v_{\hn}  \\ \notag  
    &+\left(\frac{N-4}{4}\right)^2 \int_{\hn} \frac{u^2}{r^4} \ {\rm d}v_{\hn} +\frac{(N^2- 1)}{16}\int_{\hn} \frac{u^2}{r^2} \ {\rm d}v_{\hn}\,,
     \end{align}
	 for all $u\in C_c^\infty(\hn)$  with $N\geq 5$;   in view of \eqref{base}, \eqref{PR4} reads as a \emph{strong} (and improved) version of \eqref{higher_order_poin} with $l=0$. On the other hand, a clever exploitation of \eqref{PR2}, jointly with the spherical harmonics decomposition technique, yields the following improved version of \eqref{higher_order_poin} with $l=1$:
	 \begin{align}\label{PR5}
 &   \int_{\hn} (\Delta_{\hn} u)^2 \ {\rm d}v_{\hn} \geq  \left( \frac{N-1}{2} \right)^{2} \int_{\hn} |\nabla_{\hn} u|^2 \ {\rm d}v_{\hn} \\ \notag
     &+ \frac{1}{4} \int_{\hn} \frac{|\nabla_{\hn} u|^2}{r^2} \ {\rm d}v_{\hn} 
     +\frac{(N^2- 1)}{4}\int_{\hn} 
    \frac{|\nabla_{\hn} u|^2}{\sinh^2 r} \,  {\rm d}v_{\hn}.
    \end{align}
	 for all $u\in C_c^\infty(\hn)$  with $N\geq 5$. We note that, despite their similarity, \eqref{PR2} and \eqref{PR5} are not mutually implicated. 	 	 
	 
	We complete this short discussion by mentioning that inequality \eqref{PR4} can be compared with the following improved Poincar\`e inequality proved in \cite{EDG}:
	\begin{align}\label{PR} 
& \int_{\mathbb{H}^{N}} (\Delta_{\hn} u)^2  \ dv_{\hn} \geq   \left( \frac{N-1}{2} \right)^{4}\int_{\hn} u^2 \ dv_{\hn} \\ \notag
 &+ \frac{9}{16} \int_{\mathbb{H}^{N}} \frac{u^2}{r^4} \ dv_{\hn}+ \frac{(N-1)^2}{8} \int_{\mathbb{H}^{N}} \frac{u^2}{r^2} \ dv_{\hn}  \,,
\end{align}
for all $u\in C_c^\infty(\hn)$  with $N\geq 5$  and where the constant $\frac{(N-1)^2}{8}$ was proved to be sharp. In view of \eqref{base},  \eqref{PR4} has a l.h.s. with a stronger impact than that of \eqref{PR}, furthermore, since $\frac{9}{16}<\frac{(N-4)^2}{16}$ for $N\geq 8$, inequality \eqref{PR4} gives a better improvement than \eqref{PR} of the Poincar\`e inequality \eqref{higher_order_poin} near the pole, when $N\geq 8$. However, for all dimensions, the r.h.s. of \eqref{PR} performs better at infinity. Also we mention that, unfortunately, neither \eqref{PR4} or \eqref{PR} solve the problem of sharpness of the constant in front of $ \frac{1}{r^4}$ which is still open.

    \medskip
 We notice that within the paper we prove and state all results on general model manifolds satisfying suitable curvature bounds and having $\hn$ as a remarkable particular case, see Section 2 below.

    \medskip
    The paper is organized as follows: we state our main results in Section \ref{2} and Section \ref{prototypesMM}, while the rest of the paper deals with the proofs of the results. More precisely,  in Section \ref{2} we first briefly recall some useful facts concerning to Riemannian model manifolds and we introduce some curvature bounds related to the models on which we will settle our analysis. Then, we state our first and second order improved Poincar\'e inequalities. In Section \ref{prototypesMM} we discuss the first order results on suitable prototype model manifolds allowing for unbounded curvatures from below. In Section \ref{3} we give the proof of our improved strong Poincar\'e inequality in the first order, namely inequality \eqref{poin_hardy_sharp2}  but on more general Riemannian models. Section \ref{4} contains the proof of inequalities related to the radial part of Laplace-Beltrami operator and to the radial part of gradient, namely the analogous of inequality \eqref{PR2}  on model manifolds. In Section \ref{5} we first prove \eqref{base} on general models and then we derive our improved strong Poincar\'e inequality in the second order, which reads as \eqref{PR4} on $\hn$. Finally, in Section \ref{6} we derive our improved version of \eqref{higher_order_poin} with $l=1$ on model manifolds, i.e. \eqref{PR5} on $\hn$.

    \medskip
\section{Preliminaries and Statement of Main results}\label{2}

    This section is devoted to the statements of the main results. Before stating our theorems let us briefly recall some 
	of the known facts concerning Riemannian model manifolds. 
	
	\medskip
	
	An $N$-dimensional \it Riemannian model\rm $\,(M,g)$ is an $N$-dimensional Riemannian manifold admitting a pole  $o\in M$ and whose metric $g$ is given in spherical coordinates by
	\begin{equation}\label{meetric}
		{\rm d}s^2 = {\rm d}r^2 + \psi^2(r) \, {\rm d}\omega^2,
	\end{equation}
	where $d\omega^2$ is the metric on sphere $\mathbb{S}^{N-1}$ and $\psi$ satisfies
	\begin{align}\label{psi}
	&\psi \text{ is a } C^{\infty} \text{ nonnegative function on } [0,+\infty),  \text{ positive on } (0,+\infty) \\ \notag
	 &\text{such that } \psi'(0) =1 \text{ and } \psi^{(2k)}(0)= 0 \text{ for all } k\geq 0\,.
	\end{align}
These conditions on $\psi$ ensure that the manifold is smooth and
the metric at the pole $o$ is given by the euclidean metric
\cite[Chapter 1, 3.4]{PP}. The coordinate $r$, by construction, represents the Riemannian distance from the pole $o,$ see e.g. \cite{RGR,AG,PP} for further details. In particular, all the assumptions above are satisfied by $\psi(r)=r$ and by $\psi(r)=\sinh(r)$: in the first case $M$ coincides with the euclidean space $\rn$, in the latter with the hyperbolic space $\hn$.\par
 
  It is known that there exist an orthonormal
	frame $\{F_{j} \}_{j = 1, \ldots, N}$ on $(M,g)$ where
	$F_{N}$ corresponds to the radial coordinate, and $F_{1}, \ldots, F_{N-1}$ to the spherical coordinates, for which
	$F_{i} \wedge F_{j}$ diagonalize the curvature operator $\mathcal{R}$ :
	
	\[
	\mathcal{R} (F_{i} \wedge F_{N}) = - \frac{\psi^{\prime \prime}}{\psi} F_{i} \wedge F_{N}, \quad i < N,
	\]
	\[
	\mathcal{R} (F_{i} \wedge F_{j}) = - \frac{(\psi^{\prime})^2 - 1}{\psi^2} F_{i} \wedge F_{j}, \quad i,j < N.
	\]

	The quantities
	\begin{equation}\label{curvature}
		K_{\pi,r}^{rad} := - \frac{\psi^{\prime \prime}}{\psi}\quad \text{and}  \quad H_{\pi,r}^{tan} := - \frac{(\psi^{\prime})^2 - 1}{\psi^{2}}
	\end{equation}
	then coincide with the sectional curvatures w.r.t. planes containing the radial direction and, respectively, orthogonal to it. In what follows we will sometimes need to assume that
	\begin{equation}\label{con2}
			K_{\pi,r}^{rad} \leq -1 \quad \text{in } (0,+\infty)\,.
		\end{equation}
	Since, by the Sturm-Comparison Theorem, the above condition also implies that $H_{\pi,r}^{tan} \leq -1$, with \eqref{con2} we basically require the boundedness from above of both sectional curvatures. See also Lemma \ref{sturm} in the following. 
	Finally, a further crucial quantity in our statements will be
	\begin{equation}\label{lambdarad}
		\Lambda_{\pi,r}^{rad} := - 2 K_{\pi,r}^{rad} - (N-3) H_{\pi,r}^{tan}\,,
	\end{equation}
	
	$\Lambda_{\pi,r}^{rad}$ is related to the bottom of the spectrum of the laplacian, see inequality \eqref{eq_mainthm-1} below; in particular, when $\psi(r)=\sinh(r)$, then $\Lambda_{\pi,r}^{rad}=N-1$ hence $\frac{(N-1)}{4} \Lambda_{\pi,r}^{rad}$ coincides with the bottom of the spectrum of the laplacian in ${\mathbb H}^N$.

	 \medskip
	 
Next, we recall that the Riemannian Laplacian of a scalar function $u$ on $M$ is given by
\begin{equation}\label{laplacian}
\Delta_{g} u (r, \theta_{1}, \ldots, \theta_{N-1})  =
\frac{1}{\psi^2} \frac{\partial}{\partial r} \left[ (\psi(r))^{N-1} \frac{\partial u}{\partial r}(r, \theta_{1},
\ldots, \theta_{N-1}) \right] \\
+ \frac{1}{\psi^2} \Delta_{\mathbb{S}^{N-1}} u(r, \theta_{1}, \ldots, \theta_{N-1}),
\end{equation}
where $\Delta_{\mathbb{S}^{N-1}}$ is the Riemannian Laplacian on the unit sphere $\mathbb{S}^{N-1}.$ In particular,
for radial functions, namely functions depending only on $r$, $\Delta_{g} u$ reads
\begin{equation}\label{radlaplacian}
\Delta_{r,g} u(r) = \frac{1}{(\psi(r))^{N-1}} \frac{\partial}{\partial r} \left[ (\psi(r))^{N-1} \frac{\partial u}{\partial r}
(r) \right] =  u^{\prime \prime}(r) + (N-1) \frac{\psi^{\prime}(r)}{\psi(r)} u^{\prime}(r),
\end{equation}
where from now on a prime will denote, for radial functions, derivative w.r.t $r$.

    \medskip
	We are now ready to state our results.
	
	\begin{theorem}\label{mainthm-1}
	 Let $\,(M,g)$ be an $N$-dimensional Riemannian model with $N\geq 3$ and with metric $g$ as given in \eqref{meetric} with $\psi$ satisfying \eqref{psi}.
Then, for all $ u \in C_c^{\infty}(M\setminus\{0\})$ there holds
		\begin{equation}\label{eq_mainthm-1}
			\begin{split}
				\int_{M} \rgt \, {\rm d}v_{g} \,- \,
				\frac{(N-1)}{4} \int_{M} \Lambda_{\pi,r}^{rad} \, u^2 \, {\rm d}v_{g}
				\geq \frac{1}{4} \int_{M} \frac{u^2}{r^2} \, {\rm d}v_{g} + \frac{(N-1)(N-3)}{4} \int_{M} \frac{u^2}{\psi^2} \, {\rm d}v_{g} \,,
			\end{split}
		\end{equation}
		with $\Lambda_{\pi,r}^{rad} $ as defined in \eqref{lambdarad}. Moreover, if condition \eqref{con2} holds and furthermore
			\begin{equation}\label{con2bis}
			\frac{\psi^{\prime}}{\psi}\sim C \, r^{a} \quad \text{as } r \rightarrow +\infty\,
		\end{equation}
		for some $C>0$ and $a\geq 0$, then the constant $\frac{1}{4}$ in \eqref{eq_mainthm-1} is sharp, i.e.
		\begin{equation}\label{ratio}
			\frac{1}{4} = {\inf}_{H^1(M)\setminus\{0\}}\:\frac{\int_{M}\big(\frac{\partial u}{\partial r}\big)^2 \ {\rm d}v_{g} - \frac{(N-1)}{4}\int_{M} \Lambda_{\pi,r}^{rad}  \,u^2 \ {\rm d}v_{g}}{\int_{M}\frac{u^2}{r^2} \ {\rm d}v_g}.
		\end{equation}
	 \end{theorem}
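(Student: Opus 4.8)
\medskip
\noindent\textbf{Proof proposal.} My plan is to reduce \eqref{eq_mainthm-1} to a one--dimensional weighted inequality by separation of variables, to trivialise that inequality through the Emden--Fowler type substitution $f=\psi^{-\frac{N-1}{2}}v$, and then to prove optimality of the constant $\tfrac14$ by explicitly building a minimising sequence that concentrates at infinity. First I would expand $u(r,\theta)=\sum_{k\ge0}a_k(r)Y_k(\theta)$ in spherical harmonics. Since the left--hand side of \eqref{eq_mainthm-1} involves only $\tfrac{\partial u}{\partial r}$ and $u$, while the weights $r^{-2}$, $\psi^{-2}$, $\Lambda_{\pi,r}^{rad}$ on the right--hand side are radial, Parseval's identity decouples \eqref{eq_mainthm-1} into the same inequality for each radial profile $a_k\in C_c^\infty((0,+\infty))$, \emph{with no dependence on the index $k$}. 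So it suffices to prove that, for every $f\in C_c^\infty((0,+\infty))$,
\begin{equation*}
\int_0^{\infty}(f')^2\psi^{N-1}\,{\rm d}r-\frac{N-1}{4}\int_0^{\infty}\Lambda_{\pi,r}^{rad}\,f^2\,\psi^{N-1}\,{\rm d}r-\frac{(N-1)(N-3)}{4}\int_0^{\infty}\frac{f^2}{\psi^2}\,\psi^{N-1}\,{\rm d}r\ \ge\ \frac14\int_0^{\infty}\frac{f^2}{r^2}\,\psi^{N-1}\,{\rm d}r .
\end{equation*}

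Next I would set $f=\psi^{-\frac{N-1}{2}}v$, so that $v\in C_c^\infty((0,+\infty))$ and $f^2\psi^{N-1}=v^2$. Squaring $f'=\psi^{-\frac{N-1}{2}}\big(v'-\tfrac{N-1}{2}\tfrac{\psi'}{\psi}v\big)$, integrating by parts the cross term $\int_0^\infty\tfrac{\psi'}{\psi}(v^2)'\,{\rm d}r$ (no boundary terms occur), and using $\tfrac{\psi''}{\psi}=-K_{\pi,r}^{rad}$, $\big(\tfrac{\psi'}{\psi}\big)^2=\tfrac1{\psi^2}-H_{\pi,r}^{tan}$ together with the definition \eqref{lambdarad} of $\Lambda_{\pi,r}^{rad}$, I expect to reach the identity
\begin{equation*}
\int_0^{\infty}(f')^2\psi^{N-1}\,{\rm d}r=\int_0^{\infty}(v')^2\,{\rm d}r+\frac{N-1}{4}\int_0^{\infty}\Lambda_{\pi,r}^{rad}\,v^2\,{\rm d}r+\frac{(N-1)(N-3)}{4}\int_0^{\infty}\frac{v^2}{\psi^2}\,{\rm d}r .
\end{equation*}
Plugging this into the displayed inequality and using $\int_0^\infty\Lambda_{\pi,r}^{rad}f^2\psi^{N-1}\,{\rm d}r=\int_0^\infty\Lambda_{\pi,r}^{rad}v^2\,{\rm d}r$, $\int_0^\infty\frac{f^2}{\psi^2}\psi^{N-1}\,{\rm d}r=\int_0^\infty\frac{v^2}{\psi^2}\,{\rm d}r$ and $\int_0^\infty\frac{f^2}{r^2}\psi^{N-1}\,{\rm d}r=\int_0^\infty\frac{v^2}{r^2}\,{\rm d}r$, the $\Lambda_{\pi,r}^{rad}$--terms cancel and so do the $\psi^{-2}$--terms, and what is left is precisely the classical one--dimensional Hardy inequality $\int_0^\infty(v')^2\,{\rm d}r\ge\tfrac14\int_0^\infty\frac{v^2}{r^2}\,{\rm d}r$ for $v\in C_c^\infty((0,+\infty))$. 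This proves \eqref{eq_mainthm-1}.

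For the sharpness statement, the lower bound $\ge\tfrac14$ in \eqref{ratio} follows from \eqref{eq_mainthm-1}, after extending it to $H^1(M)$ by a truncation/density argument (legitimate since $N\ge3$ near the pole, and since the volume growth forced by \eqref{con2}--\eqref{con2bis} makes the cut--off at infinity harmless). For the converse I would test \eqref{ratio} on radial functions only: writing $u=\psi^{-\frac{N-1}{2}}v$, the identity above turns the quotient in \eqref{ratio} into
\begin{equation*}
Q[v]=\frac{\displaystyle\int_0^{\infty}(v')^2\,{\rm d}r+\frac{(N-1)(N-3)}{4}\int_0^{\infty}\frac{v^2}{\psi^2}\,{\rm d}r}{\displaystyle\int_0^{\infty}\frac{v^2}{r^2}\,{\rm d}r},
\end{equation*}
and I would pick $v_n$ equal to $r^{1/2}$ on $[n,n^2]$, smoothly cut off to $0$ outside $[\tfrac n2,2n^2]$. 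By the Sturm comparison theorem (Lemma \ref{sturm}), \eqref{con2} gives $\psi(r)\ge\sinh r$, hence $\int_0^\infty v_n^2/\psi^2\,{\rm d}r\le\int_{n/2}^\infty \frac{2r}{\sinh^2 r}\,{\rm d}r\to0$, whereas $\int_0^\infty v_n^2/r^2\,{\rm d}r=\ln n+O(1)\to+\infty$ and $\int_0^\infty(v_n')^2\,{\rm d}r=\tfrac14\ln n+O(1)$, the cut--off zones contributing only $O(1)$ to each of these last two integrals (this is where \eqref{con2bis} is convenient: to make those estimates quantitative and to keep the corresponding $u_n$ in $H^1(M)$). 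Hence $Q[v_n]\to\tfrac14$, which yields \eqref{ratio}.

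The two algebraic steps should be routine; I expect the real work to lie in the last one. The key structural point is that the minimising sequence \emph{must} be sent to infinity: a sequence concentrating at the pole, where $\psi\sim r$, would make the remainder $\tfrac{(N-1)(N-3)}{4}\int v^2/\psi^2$ of the same logarithmic size as $\tfrac14\int v^2/r^2$ and would produce the limit $\tfrac14+\tfrac{(N-1)(N-3)}{4}>\tfrac14$ for $N\ge4$. So the delicate part is to tune the plateau and cut--off scales of $v_n$ so that the bulk quotient tends to $\tfrac14$ while simultaneously the $\psi^{-2}$--remainder becomes negligible and $u_n$ remains an admissible competitor in \eqref{ratio}; controlling the cut--off contributions (and the integrability requirements at infinity, through \eqref{con2bis}) is what I expect to be the main technical obstacle.
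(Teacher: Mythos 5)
Your proof of \eqref{eq_mainthm-1} is correct and is essentially the paper's argument: both reduce to a one--dimensional inequality via spherical harmonics, both use the substitution $v=\psi^{\frac{N-1}{2}}u$ to produce the identity in which the $\Lambda_{\pi,r}^{rad}$-- and $\psi^{-2}$--terms appear exactly, and both finish with the classical one--dimensional Hardy inequality $\int_0^\infty (v')^2\,{\rm d}r\ge\frac14\int_0^\infty v^2r^{-2}\,{\rm d}r$ (the paper substitutes first and decomposes second, but the computation is identical). Where you genuinely diverge is the optimality part. The paper multiplies the ground state $u_0=r^{1/2}\psi^{-\frac{N-1}{2}}$, which solves the critical Euler--Lagrange equation \eqref{critic_eq}, by a cut-off $\phi_n$ that equals $r^{-\alpha}$ for $r\ge n$; since that test function has non-compact support, the paper must invoke \eqref{con2bis} to guarantee integrability of $(u_0\phi_n)'$ at infinity, and it needs a careful passage to the limit $R\to+\infty$ in the weak formulation. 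Your sequence --- $v_n=r^{1/2}$ on a plateau $[n,n^2]$, smoothly truncated to compact support --- plays the same role (the denominator $\int v_n^2r^{-2}$ produces the logarithm $\ln n$ while the numerator produces $\tfrac14\ln n+O(1)$, and $\psi\ge\sinh r$ from Lemma \ref{sturm} kills the $\psi^{-2}$ remainder), but because it is compactly supported it sidesteps all integrability issues at infinity; in particular, contrary to your own parenthetical remark, your construction does not use \eqref{con2bis} at all, only \eqref{con2}. This buys a cleaner and slightly more general sharpness proof, at the price of losing the structural information that the paper's sequence is a genuine perturbation of the ground state (which the authors explicitly value for applications to the associated PDE). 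Your estimates of the cut-off contributions ($O(1)$ for $\int(v_n')^2$ and $\int v_n^2 r^{-2}$, $o(1)$ for the $\psi^{-2}$ term) are correct as sketched, and testing the quotient \eqref{ratio} on radial functions only is legitimate since one seeks an upper bound for an infimum.
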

	 
	 \begin{remark}
		A couple of remarks are in order about the further conditions required in the second part of the statement of Theorem \ref{mainthm-1}. Condition \eqref{con2} seems by no means technical as suggested by the following simple example. Consider the euclidean space, then $\psi(r)=r$ and \eqref{con2} clearly fails. On the other hand, in this case, $\Lambda_{\pi,r}^{rad} \equiv 0$ and inequality \eqref{eq_mainthm-1} becomes the (strong) Hardy inequality:
		$$
		\int_{\rn} \rgt \, {\rm d}v_{\rn} \geq  \frac{(N-2)^2}{4} \int_{\rn} \frac{u^2}{r^2} \, {\rm d}v_{\rn}\,,
			$$
	hence $1/4$ is no more the sharp constant in front of the term $\frac{u^2}{r^2}$. As concerns condition \eqref{con2bis}, it is needed to show that the quotient \eqref{ratio} is finite for our minimizing sequence, see Section \ref{3}. Nevertheless, we notice that this condition is not ``too restrictive", in the sense that it allows unbounded curvatures from below as it happens, for instance, if $\psi(r)=r \, e^{r^2}$ for which \eqref{con2bis} holds with $C=2$ and $a=1$; see also Section \ref{prototypesMM} for further examples and the related disussion.
	\end{remark}

	We point out that when \eqref{con2} holds inequality \eqref{eq_mainthm-1} implies the following more explicit inequality:

\begin{corollary}\label{use_cor_1}
	 Let $\,(M,g)$ be an $N$-dimensional Riemannian model with $N\geq 3$ and with metric $g$ as given in \eqref{meetric} with $\psi$ satisfying \eqref{psi} and \eqref{con2}.
	Then, for all $u \in C_c^{\infty}(M),$ there holds
	\begin{equation}\label{eqn_use_cor_1}
	\int_{M} \rgt \ {\rm d}v_g -   \left( \frac{N-1}{2} \right)^{2} \int_{M} u^2 \ {\rm d}v_g  \\  \geq \frac{1}{4} \int_{M} \frac{u^2}{r^2} \ {\rm d}v_g +\frac{(N-1)(N-3)}{4}\int_{M} \frac{u^2}{\psi^2} \ {\rm d}v_g\,.
	\end{equation}
	
\end{corollary}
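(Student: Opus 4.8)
The plan is to read Corollary \ref{use_cor_1} off directly from Theorem \ref{mainthm-1}: under assumption \eqref{con2} the radial curvature quantity $\Lambda_{\pi,r}^{rad}$ is bounded below by $N-1$, which upgrades \eqref{eq_mainthm-1} to \eqref{eqn_use_cor_1} for test functions vanishing near the pole; a standard cut-off/density argument then removes the puncture.

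For the first step, by \eqref{lambdarad} and \eqref{curvature} one has
\[
\Lambda_{\pi,r}^{rad} \;=\; -2K_{\pi,r}^{rad} - (N-3)H_{\pi,r}^{tan} \;=\; 2\,\frac{\psi''}{\psi} + (N-3)\,\frac{(\psi')^2-1}{\psi^2}.
\]
Condition \eqref{con2} says $-K_{\pi,r}^{rad}\geq 1$, and, as recalled after \eqref{con2} (Lemma \ref{sturm}), it also forces $-H_{\pi,r}^{tan}\geq 1$; since $N\geq 3$ the weight $N-3$ is non-negative, so $\Lambda_{\pi,r}^{rad}\geq 2+(N-3)=N-1$ on $(0,+\infty)$. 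Hence $\tfrac{N-1}{4}\Lambda_{\pi,r}^{rad}\geq\big(\tfrac{N-1}{2}\big)^2$ pointwise and, since $u^2\geq 0$,
\[
\frac{N-1}{4}\int_M \Lambda_{\pi,r}^{rad}\,u^2\,{\rm d}v_g \;\geq\; \Big(\frac{N-1}{2}\Big)^{2}\int_M u^2\,{\rm d}v_g;
\]
subtracting this from \eqref{eq_mainthm-1} gives \eqref{eqn_use_cor_1} for every $u\in C_c^\infty(M\setminus\{0\})$. To pass to an arbitrary $u\in C_c^\infty(M)$, note first that since $\psi(r)\sim r$ as $r\to 0^+$ and ${\rm d}v_g=\psi^{N-1}{\rm d}r\,{\rm d}\omega$, the hypothesis $N\geq 3$ makes $r^{-2}$ (hence $\psi^{-2}$) integrable near the pole, so all four integrals in \eqref{eqn_use_cor_1} are finite for this $u$. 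Choose smooth radial cut-offs $\phi_\varepsilon$ with $0\leq\phi_\varepsilon\leq1$, $\phi_\varepsilon\equiv0$ on the geodesic ball $B_\varepsilon$ around $o$, $\phi_\varepsilon\equiv1$ off $B_{2\varepsilon}$, $\phi_\varepsilon\to1$ a.e.\ on $M$, and $|\phi_\varepsilon'|\leq C/\varepsilon$, so that $\phi_\varepsilon u\in C_c^\infty(M\setminus\{0\})$. Applying the inequality just proved to $\phi_\varepsilon u$ and expanding $\partial_r(\phi_\varepsilon u)=\phi_\varepsilon' u+\phi_\varepsilon\,u_r$, the terms carrying $\phi_\varepsilon'$ are supported in the annulus $B_{2\varepsilon}\setminus B_\varepsilon$, of volume $O(\varepsilon^N)$, hence are $O(\varepsilon^{N-2})$ and $O(\varepsilon^{N-1})$ and vanish as $\varepsilon\to0$, while $\int_M\phi_\varepsilon^2 u_r^2\,{\rm d}v_g\to\int_M u_r^2\,{\rm d}v_g$ and the three right-hand terms $\int_M\phi_\varepsilon^2 u^2\,{\rm d}v_g$, $\int_M\phi_\varepsilon^2 u^2 r^{-2}\,{\rm d}v_g$, $\int_M\phi_\varepsilon^2 u^2 \psi^{-2}\,{\rm d}v_g$ converge to the corresponding integrals for $u$ by dominated convergence (using the finiteness established above). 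Letting $\varepsilon\to0$ yields \eqref{eqn_use_cor_1} for all $u\in C_c^\infty(M)$.

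The only genuinely delicate point is this last density step, i.e.\ checking that differentiating the cut-off produces negligible errors: it is precisely $N\geq 3$ that makes the $O(\varepsilon^N)$ volume of the small annulus outweigh the $O(\varepsilon^{-2})$ size of $|\phi_\varepsilon'|^2$, and that simultaneously guarantees the local integrability of $r^{-2}$ at the pole needed for dominated convergence; everything else is a direct substitution into \eqref{eq_mainthm-1}.
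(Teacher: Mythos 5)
Your proposal is correct and follows essentially the same route as the paper: both reduce the statement to the pointwise bound $\Lambda_{\pi,r}^{rad}\geq N-1$, obtained from \eqref{con2} together with the Sturm comparison estimates of Lemma \ref{sturm}, and then subtract. The only difference is in removing the puncture at the pole: the paper invokes a capacity/density argument (Remark \ref{density}, using that $\{o\}$ has zero capacity for $N>2$ together with Gauss's lemma), whereas you carry out an explicit cut-off computation whose error terms are $O(\varepsilon^{N-2})$; both are valid for $N\geq 3$.
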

A remarkable particular case to which Theorem \ref{mainthm-1} applies is when $M=\hn$, i.e. $\psi(r)=\sinh(r)$. In this case all constants in \eqref{eq_mainthm-1} are proved to be sharp.

\begin{corollary}\label{use_cor_2}
	 Let $M=\hn$, the Hyperbolic space with $N\geq 3$. 	Then, for all $u \in C_c^{\infty}(\hn)$ there holds
	  \begin{equation*}
 \int_{\hn} \rgt \ {\rm d}v_{\hn} -  \left( \frac{N-1}{2} \right)^{2} \int_{\hn} u^2 \ {\rm d}v_{\hn}  \\  \geq \frac{1}{4} \int_{\hn} \frac{u^2}{r^2} \ {\rm d}v_{\hn} +\frac{(N-1)(N-3)}{4}\int_{\hn} \frac{u^2}{(\sinh r)^2} \ {\rm d}v_{\hn}
 \end{equation*}
with all constants sharp. More precisely, the Poincar\'e 
 constant $\left( \frac{N-1}{2} \right)^{2}$ is sharp in the sense that no inequality of the form 
	$$
	\int_{\hn} \rgt \  {\rm d}v_{\hn}  \geq  c  \int_{\hn}  u^2 \  {\rm d}v_{\hn}
	$$
	holds, for  $ u \in C_{c}^{\infty}(\hn)$, when $c > \left( \frac{N-1}{2} \right)^{2}$. The constant $\frac{1}{4}$ is sharp in the sense explained in Theorem \ref{mainthm-1} while the constant $\frac{(N-1)(N-3)}{4}$ is sharp in the sense that no inequality of the form 
	$$
	 \int_{\hn} \rgt \ {\rm d}v_{\hn} -  \left( \frac{N-1}{2} \right)^{2} \int_{\hn} u^2 \ {\rm d}v_{\hn} - \frac{1}{4} \int_{\hn} \frac{u^2}{r^2} \ {\rm d}v_{\hn} \geq c\int_{\hn} \frac{u^2}{(\sinh r)^2} \ {\rm d}v_{\hn}
	$$
	holds, for  $ u \in C_{c}^{\infty}(\hn)$, when $c > \frac{(N-1)(N-3)}{4}$.
	
\end{corollary}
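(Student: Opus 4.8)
The plan is to obtain the inequality and the sharpness of the constant $\tfrac14$ as specialisations of Corollary \ref{use_cor_1} and Theorem \ref{mainthm-1}, and to deduce the sharpness of the two remaining constants by combining Gauss's lemma with the known sharpness of the corresponding constants in \eqref{poin_hardy_sharp}.

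For $M=\hn$ one has $\psi(r)=\sinh r$, hence $K_{\pi,r}^{rad}=-\sinh''(r)/\sinh r\equiv-1$ and $H_{\pi,r}^{tan}=-(\cosh^2 r-1)/\sinh^2 r\equiv-1$; thus \eqref{con2} holds (with equality) and, by \eqref{lambdarad}, $\Lambda_{\pi,r}^{rad}\equiv N-1$, so that $\tfrac{N-1}{4}\Lambda_{\pi,r}^{rad}=\left(\tfrac{N-1}{2}\right)^2$. Since also $\psi^2=\sinh^2 r$, Corollary \ref{use_cor_1} applied with this $\psi$ is exactly the claimed inequality. Moreover $\psi'/\psi=\coth r\to1$ as $r\to+\infty$, so \eqref{con2bis} holds with $C=1$ and $a=0$; as \eqref{con2} is also satisfied, the second part of Theorem \ref{mainthm-1} yields the characterisation \eqref{ratio}, which is precisely the asserted sharpness of $\tfrac14$.

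To see that the Poincaré constant $\left(\tfrac{N-1}{2}\right)^2$ is sharp, recall that by Gauss's lemma $|\nabla_{\hn}u|^2\ge\rgt$ pointwise, so the Rayleigh quotient $\int_{\hn}\rgt\,{\rm d}v_{\hn}/\int_{\hn}u^2\,{\rm d}v_{\hn}$ is at most its counterpart with $|\nabla_{\hn}u|^2$ in the numerator; hence its infimum over $C_c^\infty(\hn)\setminus\{0\}$ is $\le\left(\tfrac{N-1}{2}\right)^2$, the latter being the bottom of the $L^2$-spectrum of $-\Delta_{\hn}$ (equivalently, by the sharpness of the Poincaré constant in \eqref{poin_hardy_sharp}). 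The inequality just established is the matching lower bound, so the infimum equals $\left(\tfrac{N-1}{2}\right)^2$ and no $c>\left(\tfrac{N-1}{2}\right)^2$ is admissible. The very same comparison, applied now to the functional $\big(\int_{\hn}\rgt\,{\rm d}v_{\hn}-\left(\tfrac{N-1}{2}\right)^2\int_{\hn}u^2\,{\rm d}v_{\hn}-\tfrac14\int_{\hn}\tfrac{u^2}{r^2}\,{\rm d}v_{\hn}\big)/\int_{\hn}\tfrac{u^2}{(\sinh r)^2}\,{\rm d}v_{\hn}$ and to its analogue with $|\nabla_{\hn}u|^2$ replacing $\rgt$, reduces the sharpness of $\tfrac{(N-1)(N-3)}{4}$ to the known sharpness of the same constant in \eqref{poin_hardy_sharp}, once more using the inequality of the previous paragraph as the matching lower bound.

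Alternatively, the sharpness of $\tfrac{(N-1)(N-3)}{4}$ can be proved directly, without recourse to \eqref{poin_hardy_sharp}, by a concentration argument at the pole: for $\phi\in C_c^\infty([0,\delta))$ with $\phi\equiv1$ near $0$, put $u_\varepsilon(r)=r^{-\frac{N-2}{2}+\varepsilon}\phi(r)$ (truncated near the pole and mollified so as to lie in $C_c^\infty(\hn)$) and let $\varepsilon\to0^+$. Since near the origin $\sinh r\sim r$ and $\tfrac{1}{(\sinh r)^2}-\tfrac1{r^2}$ stays bounded, the three integrals $\int_{\hn}\big(\tfrac{\partial u_\varepsilon}{\partial r}\big)^2\,{\rm d}v_{\hn}$, $\int_{\hn}\tfrac{u_\varepsilon^2}{r^2}\,{\rm d}v_{\hn}$, $\int_{\hn}\tfrac{u_\varepsilon^2}{(\sinh r)^2}\,{\rm d}v_{\hn}$ all diverge like $\varepsilon^{-1}$ (the first with an extra factor $\left(\tfrac{N-2}{2}\right)^2$), whereas $\int_{\hn}u_\varepsilon^2\,{\rm d}v_{\hn}$ remains bounded; the algebraic identity $\left(\tfrac{N-2}{2}\right)^2-\tfrac14=\tfrac{(N-1)(N-3)}{4}$ then forces the above quotient, evaluated along $u_\varepsilon$, to converge to $\tfrac{(N-1)(N-3)}{4}$. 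There is no serious difficulty here: in the reduction route, the only delicate point is that Gauss's lemma alone yields ``$\le$" for the infima, so the inequality of Corollary \ref{use_cor_1} is genuinely needed as the matching lower bound (and one must make sure the ``suitable sense" of sharpness invoked for \eqref{poin_hardy_sharp} is the infimum characterisation used here); in the direct route the main, though routine, obstacle is the bookkeeping — checking that the Poincaré term, the terms carrying $\phi'$, and the discrepancy between the weights $\tfrac{1}{(\sinh r)^2}$ and $\tfrac1{r^2}$ are genuinely lower order as $\varepsilon\to0^+$, so that only the $\varepsilon^{-1}$-singular parts survive in the ratio.
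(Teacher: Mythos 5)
Your proposal is correct and follows essentially the same route as the paper: the inequality and the sharpness of $\tfrac14$ come from Theorem \ref{mainthm-1} (equivalently Corollary \ref{use_cor_1}) with $\psi(r)=\sinh r$, and the sharpness of $\left(\tfrac{N-1}{2}\right)^2$ and of $\tfrac{(N-1)(N-3)}{4}$ is reduced, via Gauss's lemma, to the sharpness of the same constants in \eqref{poin_hardy_sharp}. Your additional self-contained concentration argument at the pole for $\tfrac{(N-1)(N-3)}{4}$ is a correct bonus the paper does not include, but it is not needed for the main line of reasoning.
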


	\medskip 
	We refer the interested reader to Section \ref{prototypesMM} for further examples of prototypes model manifolds to which Theorem \ref{mainthm-1} applies and the interpretation of the corresponding results. Next we turn to the statement of our second order results. By combining spherical harmonics decomposition technique with Theorem \ref{mainthm-1} we derive the following second order analogue of Theorem \ref{mainthm-1}
	
	\begin{theorem}\label{mainthm-2}
		 Let $\,(M,g)$ be an $N$-dimensional Riemannian model with $N\geq 3$ and with metric $g$ as given in \eqref{meetric} with $\psi$ satisfying \eqref{psi}. Then, for all $u \in C_c^{\infty}(M),$ there holds
		\begin{align} \label{eq_mainthm-2}
	\int_{M} (\Delta_{r,g} u)^2 \ {\rm d}v_g &- \frac{(N-1)}{4} \int_{M}\left[ \Lambda_{\pi,r}^{rad}+4(K_{\pi,r}^{rad} - H_{\pi,r}^{tan}) \right] \rgt \ {\rm d}v_g \\& \geq \frac{1}{4} \int_{M} \frac{1}{r^2}\rgt \ {\rm d}v_g +\frac{(N^2- 1)}{4}\int_{M} \frac{1}{\psi^2} \rgt \ {\rm d}v_g\,,\nonumber
	\end{align}
	with $\Lambda_{\pi,r}^{rad} $ as defined in \eqref{lambdarad}.
	
		\end{theorem}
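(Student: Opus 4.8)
The plan is to combine the spherical harmonics decomposition with Theorem \ref{mainthm-1}. Write $u=\sum_{k\ge 0}a_k(r)\,Y_k(\theta)$, where $\{Y_k\}$ is an $L^2(\mathbb{S}^{N-1})$-orthonormal system of spherical harmonics; by \eqref{laplacian}--\eqref{radlaplacian} one has $\frac{\partial u}{\partial r}=\sum_k a_k'(r)Y_k(\theta)$ and $\Delta_{r,g}u=\sum_k(\Delta_{r,g}a_k)(r)Y_k(\theta)$, since the radial operators act on the profiles only (the spherical eigenvalue never appears). By orthonormality every integral in \eqref{eq_mainthm-2} splits into a sum over $k$ of one-dimensional integrals against the weight $\psi^{N-1}$, and the resulting scalar inequality for $a_k$ does not depend on $k$. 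Hence it suffices to prove, for every profile $v=a_k$ --- which is smooth on $(0,+\infty)$, compactly supported, and, since $u\in C_c^\infty(M)$, such that $v'$ and $v''$ stay bounded as $r\to0^+$ --- that
\begin{equation*}
\int_0^\infty(\Delta_{r,g}v)^2\psi^{N-1}\,{\rm d}r-\frac{N-1}{4}\int_0^\infty\Big[\Lambda_{\pi,r}^{rad}+4\big(K_{\pi,r}^{rad}-H_{\pi,r}^{tan}\big)\Big](v')^2\psi^{N-1}\,{\rm d}r\ \ge\ \frac14\int_0^\infty\frac{(v')^2}{r^2}\psi^{N-1}\,{\rm d}r+\frac{N^2-1}{4}\int_0^\infty\frac{(v')^2}{\psi^2}\psi^{N-1}\,{\rm d}r.
\end{equation*}

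The first ingredient is an integration-by-parts identity that trades $(\Delta_{r,g}v)^2$ for $(v'')^2$ plus a curvature-weighted multiple of $(v')^2$. Expanding $\Delta_{r,g}v=v''+(N-1)\frac{\psi'}{\psi}v'$ from \eqref{radlaplacian}, the cross term equals $(N-1)\int_0^\infty\psi'\psi^{N-2}\big((v')^2\big)'\,{\rm d}r=-(N-1)\int_0^\infty(\psi'\psi^{N-2})'(v')^2\,{\rm d}r$, the boundary contribution at $r=0$ vanishing because $\psi(0)=0$, $N\ge3$ and $v'$ is bounded there, and at $r=+\infty$ by compact support. Using $(\psi'\psi^{N-2})'=\psi^{N-1}\big(\tfrac{\psi''}{\psi}+(N-2)\tfrac{(\psi')^2}{\psi^2}\big)$ and then \eqref{curvature} to rewrite $\tfrac{(\psi')^2}{\psi^2}-\tfrac{\psi''}{\psi}=K_{\pi,r}^{rad}-H_{\pi,r}^{tan}+\tfrac1{\psi^2}$, one obtains
\begin{equation*}
\int_0^\infty(\Delta_{r,g}v)^2\psi^{N-1}\,{\rm d}r=\int_0^\infty(v'')^2\psi^{N-1}\,{\rm d}r+(N-1)\int_0^\infty\Big(K_{\pi,r}^{rad}-H_{\pi,r}^{tan}+\frac1{\psi^2}\Big)(v')^2\psi^{N-1}\,{\rm d}r
\end{equation*}
(equivalently, on $M$, $\int_M(\Delta_{r,g}u)^2\,{\rm d}v_g=\int_M\big(\tfrac{\partial^2u}{\partial r^2}\big)^2\,{\rm d}v_g+(N-1)\int_M\big(K_{\pi,r}^{rad}-H_{\pi,r}^{tan}+\psi^{-2}\big)\big(\tfrac{\partial u}{\partial r}\big)^2\,{\rm d}v_g$).

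The second ingredient is a lower bound for $\int_0^\infty(v'')^2\psi^{N-1}\,{\rm d}r$, obtained by applying the radial version of \eqref{eq_mainthm-1} to $w:=v'$. As $w$ need not vanish near the pole, one first truncates: with $\chi_\varepsilon$ a cut-off equal to $0$ on $(0,\varepsilon)$, to $1$ on $(2\varepsilon,+\infty)$, and with $|\chi_\varepsilon'|\le C\varepsilon^{-1}$, the function $\chi_\varepsilon w$ is the radial profile of an element of $C_c^\infty(M\setminus\{0\})$, so \eqref{eq_mainthm-1} applies to it. Letting $\varepsilon\to0$, the truncation error in the Dirichlet term is $\int_0^\infty(\chi_\varepsilon')^2w^2\psi^{N-1}\,{\rm d}r\le C\varepsilon^{-2}\int_\varepsilon^{2\varepsilon}w^2\psi^{N-1}\,{\rm d}r=O(\varepsilon^{N-2})\to0$, because $w$ is bounded near $0$, $\psi\sim r$ and $N\ge3$; all the other integrals pass to the limit by dominated convergence, since $\Lambda_{\pi,r}^{rad}\psi^{N-1}$, $r^{-2}\psi^{N-1}$ and $\psi^{N-3}$ are locally integrable at $r=0$ for $N\ge3$. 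This gives \eqref{eq_mainthm-1} for $w=v'$, i.e. $\int_0^\infty(v'')^2\psi^{N-1}\,{\rm d}r\ge\frac{N-1}{4}\int_0^\infty\Lambda_{\pi,r}^{rad}(v')^2\psi^{N-1}\,{\rm d}r+\frac14\int_0^\infty\frac{(v')^2}{r^2}\psi^{N-1}\,{\rm d}r+\frac{(N-1)(N-3)}{4}\int_0^\infty\frac{(v')^2}{\psi^2}\psi^{N-1}\,{\rm d}r$.

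Substituting this bound into the identity of the first step and rearranging, the curvature terms combine into $\frac{N-1}{4}\Lambda_{\pi,r}^{rad}+(N-1)(K_{\pi,r}^{rad}-H_{\pi,r}^{tan})=\frac{N-1}{4}\big[\Lambda_{\pi,r}^{rad}+4(K_{\pi,r}^{rad}-H_{\pi,r}^{tan})\big]$, while the two $\psi^{-2}$-weighted terms add up with coefficient $\frac{(N-1)(N-3)}{4}+(N-1)=\frac{N^2-1}{4}$; this is exactly the reduced inequality, and summing over $k\ge0$ (Parseval on $\mathbb{S}^{N-1}$) yields \eqref{eq_mainthm-2}. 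The main obstacle is the second step: \eqref{eq_mainthm-1} is available only for test functions supported away from the pole, whereas $w=\frac{\partial u}{\partial r}$ (equivalently $v'=a_k'$) is merely bounded there, so one must verify that replacing $w$ by the truncations $\chi_\varepsilon w$ loses no mass --- and this is precisely where the restriction $N\ge3$ enters.
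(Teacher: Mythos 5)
Your proposal is correct and follows essentially the same route as the paper: decompose into spherical harmonics, use the integration-by-parts identity to convert $(\Delta_{r,g}v)^2$ into $(v'')^2$ plus the curvature-weighted term $(N-1)\bigl(K_{\pi,r}^{rad}-H_{\pi,r}^{tan}+\psi^{-2}\bigr)(v')^2$, and then apply the radial form of \eqref{eq_mainthm-1} to each $a_n'$, with the coefficients recombining exactly as in \eqref{eq_mainthm-2}. Your truncation argument justifying the application of \eqref{eq_mainthm-1} to $w=v'$, which does not vanish at the pole, is a small extra care that the paper leaves implicit, but it does not change the method.
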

		
	Under suitable curvature bounds inequality \eqref{eq_mainthm-2} implies the following more explicit inequality:	
		
		\begin{corollary}\label{cor_mainthm-2}
	 Let $\,(M,g)$ be an $N$-dimensional Riemannian model with $N\geq 3$ and with metric $g$ as given in \eqref{meetric} with $\psi$ satisfying \eqref{psi} and \eqref{con2}.
	  If furthermore
		\begin{equation}\label{con3}
		K_{\pi,r}^{rad} \geq H_{\pi,r}^{tan}\quad \text{in }(0,+\infty)\,,
		\end{equation} 
		then there holds 
		\begin{align}\label{further}
	\int_{M} (\Delta_{r,g} u)^2 \ {\rm d}v_g &-  \left( \frac{N-1}{2} \right)^{2} \int_{M} \rgt {\rm d}v_g \\& \geq \frac{1}{4} \int_{M} \frac{1}{r^2}\rgt \ {\rm d}v_g +\frac{(N^2- 1)}{4}\int_{M} \frac{1}{\psi^2} \rgt \ {\rm d}v_g\,,\nonumber
	\end{align}
for all $u \in C_c^{\infty}(M)$. 
\end{corollary}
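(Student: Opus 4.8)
The plan is to derive \eqref{further} directly from Theorem \ref{mainthm-2}, by a purely pointwise comparison of the coefficients multiplying $\rgt$. The right-hand sides of \eqref{eq_mainthm-2} and \eqref{further} coincide, and $\rgt\ge 0$; hence it suffices to show that, under the hypotheses \eqref{con2} and \eqref{con3},
\begin{equation*}
\frac{N-1}{4}\Big[\Lambda_{\pi,r}^{rad}+4\big(K_{\pi,r}^{rad}-H_{\pi,r}^{tan}\big)\Big]\ \ge\ \left(\frac{N-1}{2}\right)^{2}\qquad\text{in }(0,+\infty).
\end{equation*}
Indeed, once this is known the quantity subtracted on the left-hand side of \eqref{further} is no larger than the one subtracted in \eqref{eq_mainthm-2}, so \eqref{further} follows as a (weaker) consequence of \eqref{eq_mainthm-2}.

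Next I would reduce the displayed pointwise bound to a transparent form. Dividing by $\frac{N-1}{4}>0$ (recall $N\ge 3$) and inserting the definition \eqref{lambdarad}, namely $\Lambda_{\pi,r}^{rad}=-2K_{\pi,r}^{rad}-(N-3)H_{\pi,r}^{tan}$, a one-line computation shows that the bound is equivalent to
\begin{equation*}
2K_{\pi,r}^{rad}-(N+1)H_{\pi,r}^{tan}\ \ge\ N-1\qquad\text{in }(0,+\infty).
\end{equation*}

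To prove this, I would use the two curvature assumptions in turn. From \eqref{con3}, $H_{\pi,r}^{tan}\le K_{\pi,r}^{rad}$, whence $-(N+1)H_{\pi,r}^{tan}\ge-(N+1)K_{\pi,r}^{rad}$, and therefore
\begin{equation*}
2K_{\pi,r}^{rad}-(N+1)H_{\pi,r}^{tan}\ \ge\ \big(2-(N+1)\big)K_{\pi,r}^{rad}\ =\ -(N-1)K_{\pi,r}^{rad}.
\end{equation*}
Then \eqref{con2}, i.e. $K_{\pi,r}^{rad}\le-1$, yields $-(N-1)K_{\pi,r}^{rad}\ge N-1$, which is exactly the claim. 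Substituting the resulting coefficient bound into \eqref{eq_mainthm-2} and using $\rgt\ge 0$ gives \eqref{further} for every $u\in C_c^{\infty}(M)$.

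Every step here is elementary, so there is no genuine obstacle; the only points deserving attention are to check that the coefficient inequality points in the direction that makes the passage from \eqref{eq_mainthm-2} to \eqref{further} a legitimate weakening (so that subtracting the smaller quantity $\left(\frac{N-1}{2}\right)^{2}\rgt$ still preserves the inequality), and to confirm that both \eqref{con2} and \eqref{con3} are needed — and together suffice — to close the pointwise estimate $2K_{\pi,r}^{rad}-(N+1)H_{\pi,r}^{tan}\ge N-1$.
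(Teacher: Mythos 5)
Your proof is correct and follows essentially the same route as the paper: both reduce \eqref{further} to the pointwise bound $\Lambda_{\pi,r}^{rad}+4\bigl(K_{\pi,r}^{rad}-H_{\pi,r}^{tan}\bigr)\geq N-1$ and then feed it into \eqref{eq_mainthm-2}. The only (harmless) difference is that you use \eqref{con3} to eliminate $H_{\pi,r}^{tan}$ in favour of $K_{\pi,r}^{rad}$ and then apply \eqref{con2}, whereas the paper bounds $\Lambda_{\pi,r}^{rad}\geq N-1$ via the Sturm comparison (as in Corollary \ref{use_cor_1}) and drops the nonnegative term $4\bigl(K_{\pi,r}^{rad}-H_{\pi,r}^{tan}\bigr)$.
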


\begin{remark}
	Condition \eqref{con3} holds with the equality if $M=\hn$; examples of models satisfying \eqref{con2} and for which the strict inequality holds in \eqref{con3} can be given by taking $\psi(r)= re^{br}$ for $r$ large, with $b\geq 1$, see the proof of Corollary \ref{hardy-0} in Section \ref{prototypesMM}.
	\end{remark}
	
When $M = \hn$ \eqref{eq_mainthm-2} clearly coincides with \eqref{further} but we also have the optimality of the constant $ \left( \frac{N-1}{2} \right)^{2} $. For the sake of clarity we give the precise statement here below:
		\begin{corollary}\label{use_mainthm_2}
	 Let $M=\hn$, the Hyperbolic space with $N\geq 3$. 	Then, for all $u \in C_c^{\infty}(\hn)$ there holds
	 \begin{align*}
		\int_{\hn} (\Delta_{r,g} u)^2 \ {\rm d}v_{\hn}& -   \left( \frac{N-1}{2} \right)^{2} \int_{\hn} \rgt {\rm d}v_{\hn}\\ &\geq \frac{1}{4} \int_{\hn} \frac{1}{r^2}\rgt {\rm d}v_{\hn} +\frac{(N^2- 1)}{4}\int_{\hn} \frac{1}{(\sinh r)^2} \rgt {\rm d}v_{\hn}
		\end{align*}
		for all $ u \in C_{c}^{\infty}(\hn)$.  Furthermore the constant  $ \left( \frac{N-1}{2} \right)^{2} $ in the above inequality turns out to be sharp in the sense that no inequality of the form 
		$$
		\int_{\hn} (\Delta_{r,g} u)^2  \ {\rm d}v_{\hn}  \geq  c  \int_{\hn} \rgt \  {\rm d}v_{\hn}
		$$
		holds, for  $ u \in C_{c}^{\infty}(\hn)$ when $c >  \left( \frac{N-1}{2} \right)^{2}.$
	
\end{corollary}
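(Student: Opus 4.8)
The inequality itself costs nothing: it is the specialisation of Corollary~\ref{cor_mainthm-2} to $M=\hn$. For $\psi(r)=\sinh r$ the curvature functions in \eqref{curvature} are $K_{\pi,r}^{rad}=H_{\pi,r}^{tan}\equiv-1$, so conditions \eqref{con2} and \eqref{con3} hold (the latter with equality), $\Lambda_{\pi,r}^{rad}\equiv N-1$ by \eqref{lambdarad}, and therefore $\frac{N-1}{4}\bigl[\Lambda_{\pi,r}^{rad}+4(K_{\pi,r}^{rad}-H_{\pi,r}^{tan})\bigr]=\frac{(N-1)^2}{4}=\bigl(\frac{N-1}{2}\bigr)^2$; thus \eqref{eq_mainthm-2}, equivalently \eqref{further}, reduces to the stated inequality with $\psi^2=\sinh^2r$. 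Hence the whole content of the corollary is the sharpness of the Poincar\'e constant $\la_0:=\bigl(\frac{N-1}{2}\bigr)^2$, and this is the step I would focus on.

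For the sharpness I would proceed as for the first--order statement of Corollary~\ref{use_cor_2}, by producing an explicit minimising sequence. Since radial functions form a subset of $C_c^\infty(\hn)$, it is enough to construct \emph{radial} $u_n\in C_c^\infty(\hn)$ with $\int_{\hn}(\Delta_{r,\hn}u_n)^2\,{\rm d}v_{\hn}\big/\int_{\hn}(u_n')^2\,{\rm d}v_{\hn}\to\la_0$: then for every $c>\la_0$ one has $\int_{\hn}(\Delta_{r,\hn}u_n)^2\,{\rm d}v_{\hn}<c\int_{\hn}(u_n')^2\,{\rm d}v_{\hn}$ for $n$ large, so no such inequality can hold on all of $C_c^\infty(\hn)$. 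By \eqref{radlaplacian} the problem becomes one--dimensional, the two integrals being, up to the factor $|\mathbb{S}^{N-1}|$, $\int_0^\infty\bigl(u_n''+(N-1)\coth r\,u_n'\bigr)^2(\sinh r)^{N-1}\,{\rm d}r$ and $\int_0^\infty(u_n')^2(\sinh r)^{N-1}\,{\rm d}r$.

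The sequence I would use is $u_n:=\varphi\,\chi_n$, where $\varphi$ is the smooth, positive, radial solution of $-\Delta_{\hn}\varphi=\la_0\varphi$ regular at the pole — so that $\Delta_{r,\hn}\varphi=-\la_0\varphi$ and, as is classical, $\varphi(r),\varphi'(r)$ decay like $r\,e^{-(N-1)r/2}$ as $r\to+\infty$, in particular $\varphi\notin L^2(\hn)$ — and $\chi_n(r):=\chi(r/n)$ for a fixed $\chi\in C_c^\infty([0,+\infty))$, $\chi\equiv1$ on $[0,1]$, $\chi\equiv0$ on $[2,+\infty)$. Writing $I_n:=\int_{\hn}\varphi^2\chi_n^2\,{\rm d}v_{\hn}$ (which grows like $n^3$) and using the Leibniz rule $\Delta_{r,\hn}(\varphi\chi_n)=(\Delta_{r,\hn}\varphi)\chi_n+2\varphi'\chi_n'+\varphi\,\Delta_{r,\hn}\chi_n$, one obtains $\Delta_{r,\hn}u_n=-\la_0\varphi\chi_n+R_n$ with $R_n$ supported in $\{n\le r\le2n\}$ and $\|R_n\|_{L^2(\hn)}^2=O(n)=o(I_n)$ (because $|\chi_n'|\le C/n$, $|\chi_n''|\le C/n^2$ and $(\varphi^2+(\varphi')^2)(\sinh r)^{N-1}=O(r^2)$ on that annulus). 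An integration by parts, using $-\Delta_{r,\hn}\varphi=\la_0\varphi$ again, gives $\int_{\hn}(u_n')^2\,{\rm d}v_{\hn}=\la_0 I_n+\int_{\hn}\varphi^2(\chi_n')^2\,{\rm d}v_{\hn}=\la_0 I_n+o(I_n)$; and a Cauchy--Schwarz estimate of the cross term gives $\int_{\hn}(\Delta_{r,\hn}u_n)^2\,{\rm d}v_{\hn}=\la_0^2 I_n+o(I_n)$. Dividing, the Rayleigh quotient tends to $\la_0^2/\la_0=\la_0$, as required.

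The delicate point — the one I would check most carefully — is the error bookkeeping in the last paragraph: one has to make sure every cut--off contribution is of \emph{strictly} lower order than $I_n$, and this is precisely why it is essential that $\varphi\notin L^2(\hn)$ (so that $I_n\to+\infty$) and that $\varphi,\varphi'$ have only the critical exponential decay, so that $\varphi^2(\sinh r)^{N-1}$ grows merely polynomially and the transition annulus $\{n\le r\le2n\}$ costs only an $O(n)$ against a bulk of order $n^3$. A shortcut avoiding the explicit construction would be to invoke the known sharpness of \eqref{higher_order_poin} with $l=1$ from \cite{ngo}: if the corresponding extremal sequence can be chosen radial, then for such functions $\int_{\hn}(\Delta_{r,\hn}u)^2\,{\rm d}v_{\hn}=\int_{\hn}(\Delta_{\hn}u)^2\,{\rm d}v_{\hn}$ and $\int_{\hn}(u')^2\,{\rm d}v_{\hn}=\int_{\hn}|\nabla_{\hn}u|^2\,{\rm d}v_{\hn}$, so the two Rayleigh quotients coincide and the sharpness transfers at once.
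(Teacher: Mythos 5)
Your proposal is correct, and the first half (deriving the inequality from Theorem \ref{mainthm-2}/Corollary \ref{cor_mainthm-2} with $K_{\pi,r}^{rad}=H_{\pi,r}^{tan}\equiv-1$, $\Lambda_{\pi,r}^{rad}\equiv N-1$) is exactly what the paper does. Where you genuinely diverge is the sharpness of $\left(\frac{N-1}{2}\right)^2$: the paper disposes of it in one line by combining the known sharpness of \eqref{higher_order_poin} from \cite{ngo} with \eqref{base} (equality for radial functions, Lemma \ref{rad_lap}) and Gauss's lemma — in effect reducing to radial test functions, for which $\Delta_{r,\hn}u=\Delta_{\hn}u$ and $\left|\frac{\partial u}{\partial r}\right|=|\nabla_{\hn}u|$, and then borrowing the radial minimising sequence of the cited result. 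You instead build the minimising sequence explicitly from the generalised ground state $\varphi$ at the bottom of the spectrum ($-\Delta_{\hn}\varphi=\lambda_0\varphi$, $\varphi\sim r\,e^{-(N-1)r/2}$, so $\varphi^2(\sinh r)^{N-1}=O(r^2)$) truncated at scale $n$, and your bookkeeping is sound: $I_n\sim n^3$ while every cut-off contribution is $O(n)$ or, via Cauchy--Schwarz, $O(n^2)$, so the Rayleigh quotient tends to $\lambda_0^2/\lambda_0=\lambda_0$. Your route is longer but self-contained and has the merit of not requiring one to check that the extremal sequence in \cite{ngo} can be taken radial — a point the paper's one-line citation implicitly relies on and which you correctly flag as the caveat of the ``shortcut''; the paper's route buys brevity at the cost of that external dependence. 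The only detail worth pinning down in your version is the asymptotics of $\varphi$ (double characteristic root $-\frac{N-1}{2}$ of $t^2+(N-1)t+\lambda_0=0$, cf.\ $\varphi=r/\sinh r$ for $N=3$), but even the recessive behaviour $e^{-(N-1)r/2}$ would leave $\varphi\notin L^2$ and the argument intact with $I_n\sim n$.
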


	\medskip
	Next we state a Rellich type improvement for the second order Poincar\'e inequality \eqref{higher_order_poin} with $\ell=0$ but on more general model manifolds.  The proof comes by exploiting either inequality \eqref{base} on Riemannian models (see Lemma \ref{rad_lap} in the following) and inequality \eqref{eq_mainthm-2}, the first brought the restriction $N\geq 5$ and the latter yields the curvature conditions  \eqref{con2} and \eqref{con3} below.

	\begin{theorem}\label{mainthm-3}
	  	 Let $\,(M,g)$ be an $N$-dimensional Riemannian model with $N\geq 5$ and with metric $g$ as given in \eqref{meetric} with $\psi$ satisfying \eqref{psi},  \eqref{con2} and \eqref{con3}. Then for all $ u \in C_{c}^{\infty}(M)$ there holds

	  	\begin{align}\label{eq_mainthm-3}
	  	\int_{M} (\Delta_{g} u)^2 \ {\rm d}v_{g} -   \left( \frac{N-1}{2} \right)^{4}
	  	\int_{M} u^2 \ {\rm d}v_{g} \geq \frac{(N-4)^2}{16} \int_{M} \frac{u^2}{r^4} \ {\rm d}v_{g} + 
	  	\frac{(N-1)^2}{16} \int_{M} \frac{u^2}{r^2} {\rm d}v_{g}\,.
	  	\end{align}
	\end{theorem}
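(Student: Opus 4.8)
\textbf{Proof proposal for Theorem \ref{mainthm-3}.}

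The plan is to combine the ingredient inequality \eqref{base} (in its Riemannian-model form, which is Lemma \ref{rad_lap} and requires $N\geq 5$) with the second-order strong inequality \eqref{eq_mainthm-2} of Theorem \ref{mainthm-2}, and then to dispose of the weighted gradient terms on the right-hand side by invoking the first-order inequality \eqref{eq_mainthm-1} of Theorem \ref{mainthm-1} together with a weighted variant of it. First I would write, using Lemma \ref{rad_lap},
\begin{equation*}
\int_M (\Delta_g u)^2 \,{\rm d}v_g \;\geq\; \int_M (\Delta_{r,g} u)^2 \,{\rm d}v_g,
\end{equation*}
so that it suffices to bound $\int_M (\Delta_{r,g}u)^2\,{\rm d}v_g$ from below by the right-hand side of \eqref{eq_mainthm-3} plus $\big(\tfrac{N-1}{2}\big)^4\int_M u^2$. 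Under assumptions \eqref{con2} and \eqref{con3}, Corollary \ref{cor_mainthm-2} already gives
\begin{equation*}
\int_M (\Delta_{r,g} u)^2 \,{\rm d}v_g \;\geq\; \Big(\tfrac{N-1}{2}\Big)^{2}\int_M \rgt \,{\rm d}v_g + \frac14\int_M \frac{1}{r^2}\rgt \,{\rm d}v_g + \frac{N^2-1}{4}\int_M \frac{1}{\psi^2}\rgt\,{\rm d}v_g.
\end{equation*}

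The second step is to estimate each of the three terms on this right-hand side separately, keeping a careful bookkeeping of how the free ``mass'' is distributed. For the first term I would apply Corollary \ref{use_cor_1}, which under \eqref{con2} yields
$\int_M \rgt\,{\rm d}v_g \geq \big(\tfrac{N-1}{2}\big)^2\int_M u^2 + \tfrac14\int_M \tfrac{u^2}{r^2} + \tfrac{(N-1)(N-3)}{4}\int_M \tfrac{u^2}{\psi^2}$; multiplying by $\big(\tfrac{N-1}{2}\big)^2$ produces the Poincaré constant $\big(\tfrac{N-1}{2}\big)^4$ together with contributions of order $r^{-2}$ and $\psi^{-2}$. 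For the weighted term $\int_M \tfrac1{r^2}\rgt$ I would use a weighted (strong) Hardy--Poincaré inequality with weight $r^{-2}$: this should follow either from a direct ``Hardy in the radial variable with weight $r^{-2}$'' argument (writing $\int (\partial_r u)^2 r^{-2}\psi^{N-1}\,dr\,d\omega$ and integrating by parts to extract $\tfrac{(N-4)^2}{4}\int \tfrac{u^2}{r^4}$ plus lower-order terms, the shift $N\mapsto N-4$ coming from the extra $r^{-2}$), or by substituting $u\mapsto u/r$ type test functions, as is done elsewhere in the paper to pass from \eqref{PR2} to \eqref{PR4}. The third term $\tfrac{N^2-1}{4}\int_M \tfrac1{\psi^2}\rgt$ is nonnegative and can simply be discarded, or partially kept to help the $r^{-2}$-coefficient via $\psi(r)\leq \sinh r$ (a consequence of \eqref{con2} and Lemma \ref{sturm}) near the pole where $\psi\sim r$.

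The main obstacle I anticipate is the precise constant accounting: one must check that the $r^{-4}$ coefficients assembled from (i) the weighted Hardy inequality applied to $\int \tfrac1{r^2}\rgt$ and (ii) any $r^{-4}$ contribution coming from iterating $r^{-2}$-weighted estimates on $\int\tfrac{u^2}{r^2}$, actually add up to exactly $\tfrac{(N-4)^2}{16}$, and similarly that the $r^{-2}$ coefficient comes out to be $\tfrac{(N-1)^2}{16} = \tfrac14\cdot\big(\tfrac{N-1}{2}\big)^2$, which is consistent with the factor $\big(\tfrac{N-1}{2}\big)^2$ multiplying the $\tfrac14\int \tfrac{u^2}{r^2}$ term from Corollary \ref{use_cor_1}. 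One also has to be careful that all integrations by parts are legitimate for $u\in C_c^\infty(M)$ — in particular that no boundary contribution at $r=0$ appears, which is where the dimensional restriction and the oddness properties \eqref{psi} of $\psi$ enter; passing from $C_c^\infty(M\setminus\{0\})$ to $C_c^\infty(M)$ (as already handled in Corollary \ref{use_cor_1}) requires $N\geq 5$ for the $r^{-4}$-weighted quantities to be controlled near the origin. Once the coefficient matching is verified, the inequality \eqref{eq_mainthm-3} follows by collecting terms.
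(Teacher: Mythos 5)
Your proposal follows essentially the same route as the paper: Lemma \ref{rad_lap} to pass from $\Delta_g$ to $\Delta_{r,g}$, Corollary \ref{cor_mainthm-2} combined with Corollary \ref{use_cor_1} (this is precisely the paper's Corollary \ref{use_cor_3}), then the weighted Hardy inequality $\int_M \frac{1}{r^2}\rgt {\rm d}v_g \geq \frac{(N-4)^2}{4}\int_M \frac{u^2}{r^4}\,{\rm d}v_g$ (which the paper simply quotes from \cite[Theorem 3.1]{VHN} rather than rederiving by the integration by parts you sketch), and finally discarding the nonnegative $\psi^{-2}$ terms. The only inaccuracy is the unused parenthetical claim $\psi(r)\leq \sinh r$ --- Lemma \ref{sturm} gives the reverse inequality, so the $\psi^{-2}$ term cannot reinforce the $r^{-2}$ coefficient --- but since your main line simply drops that term, the argument stands.
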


	\begin{remark}
As already explained in the Introduction, inequality  \eqref{eq_mainthm-3} with $M=\hn$ must be compared with inequality \eqref{PR}. In particular, it gives rise to the interesting fact that the constant appearing in front of the Rellich term $\frac{u^2}{r^4}$ can be larger than $\frac{9}{16}$.
	\end{remark}

	\medskip
	We conclude by stating a Hardy-type improvement of the second order Poincar\'e inequality \eqref{higher_order_poin} with $\ell=1$ on model manifolds.  Here the main tools exploited in the proofs are spherical harmonics decomposition and reduction of dimension technique. The latter yields the restriction $N\geq 5$, while conditions  \eqref{con2} and \eqref{con3} come again from inequality \eqref{eq_mainthm-2} that we apply for each component of the decomposition.

	\begin{theorem}\label{mainthm-4}
		Let $\,(M,g)$ be an $N$-dimensional Riemannian model with $N\geq 5$ and with metric $g$ as given in \eqref{meetric} with $\psi$ satisfying \eqref{psi}, 
		\eqref{con2} and \eqref{con3}.
		Then for all $ u \in C_{c}^{\infty}(M)$ there holds
		\begin{equation}\label{eq_mainthm-4}
			\int_{M} (\Delta_{g} u)^2 \ {\rm d}v_g -   \left( \frac{N-1}{2} \right)^{2}  \int_{M} |\nabla_{g} u|^2 \ {\rm d}v_g  \\  \geq \frac{1}{4} \int_{M} \frac{|\nabla_{g} u|^2}{r^2} \ {\rm d}v_g +\frac{(N^2- 1)}{4}\int_{M} \frac{|\nabla_{g}u|^2}{\psi^2} \ {\rm d}v_g\,.
		\end{equation}
		
	\end{theorem}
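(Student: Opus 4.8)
The plan is to prove \eqref{eq_mainthm-4} by a spherical harmonics decomposition, which turns it into a one–parameter family of one–dimensional inequalities, and then to handle the zeroth mode via Corollary~\ref{cor_mainthm-2} and the higher modes via an integration by parts combined with a ``reduction of dimension''. First I would write $u(r,\theta)=\sum_{k\ge 0}u_k(r)\,Y_k(\theta)$, where $\{Y_k\}$ is an orthonormal basis of eigenfunctions of $-\Delta_{\mathbb{S}^{N-1}}$ with eigenvalues $\lambda_k=k(k+N-2)$; by \eqref{laplacian} one has $\Delta_g u=\sum_k\big(\Delta_{r,g}u_k-\tfrac{\lambda_k}{\psi^2}u_k\big)Y_k$, and $|\nabla_g u|^2=(\partial_r u)^2+\psi^{-2}|\nabla_{\mathbb{S}^{N-1}}u|^2$. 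Integrating over $\mathbb{S}^{N-1}$ and using orthonormality, \eqref{eq_mainthm-4} becomes equivalent to the validity, for every $k\ge 0$, of
\[
\int_0^{\infty}\!\Big(\Delta_{r,g}u_k-\tfrac{\lambda_k}{\psi^2}u_k\Big)^{2}\psi^{N-1}dr-\Big(\tfrac{N-1}{2}\Big)^{2}\!\int_0^{\infty}\!\Big[(u_k')^{2}+\tfrac{\lambda_k}{\psi^2}u_k^{2}\Big]\psi^{N-1}dr\ \ge\ \tfrac14\!\int_0^{\infty}\!\tfrac{(u_k')^{2}+\lambda_k\psi^{-2}u_k^{2}}{r^{2}}\psi^{N-1}dr+\tfrac{N^{2}-1}{4}\!\int_0^{\infty}\!\tfrac{(u_k')^{2}+\lambda_k\psi^{-2}u_k^{2}}{\psi^{2}}\psi^{N-1}dr .
\]
For $k=0$ (hence $\lambda_0=0$) this is precisely \eqref{further} of Corollary~\ref{cor_mainthm-2} for the radial function $u_0$ in dimension $N$, so the task reduces to the modes $k\ge 1$.

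For $k\ge 1$ I would expand $\big(\Delta_{r,g}u_k-\tfrac{\lambda_k}{\psi^2}u_k\big)^2=(\Delta_{r,g}u_k)^2-\tfrac{2\lambda_k}{\psi^2}u_k\,\Delta_{r,g}u_k+\tfrac{\lambda_k^2}{\psi^4}u_k^2$ and integrate the mixed term by parts twice (using \eqref{radlaplacian}; the boundary contributions vanish at $r=0$ since $u_k$ vanishes there like $r^k$, and at $r=+\infty$ by compact support). Besides the three terms $\int(\Delta_{r,g}u_k)^2\psi^{N-1}$, $2\lambda_k\int(u_k')^2\psi^{N-3}$ and $\lambda_k^2\int u_k^2\psi^{N-5}$, this produces the zeroth–order contribution $2\lambda_k\int_0^\infty\!\big[(N-4)(\psi')^2\psi^{N-5}+\psi''\psi^{N-4}\big]u_k^2\,dr$, and here the curvature hypothesis \eqref{con2} is used in its two elementary consequences on $(0,+\infty)$: $\psi''\ge\psi$ and, integrating $2\psi'\psi''\ge 2\psi\psi'$ from the origin, $(\psi')^2\ge 1+\psi^2$; together they bound that contribution from below by $2\lambda_k\big[(N-4)t_k+(N-3)s_k\big]$, with $s_k:=\int_0^\infty u_k^2\psi^{N-3}dr$ and $t_k:=\int_0^\infty u_k^2\psi^{N-5}dr$. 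Treating the block $\int(\Delta_{r,g}u_k)^2\psi^{N-1}-(\tfrac{N-1}{2})^2\int(u_k')^2\psi^{N-1}$ by \eqref{further} of Corollary~\ref{cor_mainthm-2} applied to the radial $u_k$ (this is where \eqref{con3} enters), collecting everything and dividing by $\lambda_k>0$, the $k$-th inequality is reduced to
\[
\Big[\lambda_k+2(N-4)-\tfrac{N^2-1}{4}\Big]\,t_k+2q_k-\tfrac14 e_k-\tfrac{(N-5)^2}{4}\,s_k\ \ge\ 0,\qquad q_k:=\!\int_0^\infty\!(u_k')^2\psi^{N-3}dr,\quad e_k:=\!\int_0^\infty\!\tfrac{u_k^2}{r^2}\psi^{N-3}dr .
\]

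The last step is the ``reduction of dimension'': the weight $\psi^{N-3}=\psi^{(N-2)-1}$ is the volume density of the $(N-2)$–dimensional model with the same profile $\psi$, so — since \eqref{con2} depends only on $\psi$, and $N-2\ge 3$ exactly when $N\ge 5$ — Corollary~\ref{use_cor_1} applied in dimension $N-2$ to the radial function $u_k$ gives $q_k\ge\big(\tfrac{N-3}{2}\big)^2s_k+\tfrac14 e_k+\tfrac{(N-3)(N-5)}{4}t_k$. Substituting this bound for $\tfrac14 e_k$, the coefficient of $t_k$ collapses to $\lambda_k-4$ and that of $s_k$ to $N-4\ge 0$, and since $\lambda_k=k(k+N-2)\ge\lambda_1=N-1\ge 4$ for $k\ge 1$ and $N\ge 5$ the inequality holds, which finishes the proof. (A routine density argument covers the fact that for odd $k$ the radial function $u_k$, although vanishing at the origin, need not be smooth there.) The hard parts will be the double integration by parts with its sign bookkeeping, and the \emph{sharp} use of \eqref{con2} — the cruder bound $(\psi')^2\ge 1$ does not suffice to control the possibly negative coefficient of $t_k$ for large $N$ — together with the algebraic cancellation $\lambda_k+2(N-4)-\tfrac{N^2-1}{4}+\tfrac{(N-3)(N-5)}{4}=\lambda_k-4$ on which the whole argument hinges.
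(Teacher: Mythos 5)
Your proposal is correct and follows essentially the same route as the paper's proof: spherical harmonics decomposition, Corollary~\ref{cor_mainthm-2} for the purely radial second--order block, integration by parts on the mixed term controlled via the consequences $\psi''\ge\psi$ and $(\psi')^2\ge 1+\psi^2$ of \eqref{con2}, the reduction to Corollary~\ref{use_cor_1} in dimension $N-2$ (whence $N\ge 5$), and finally $\lambda_k\ge N-1$. The only differences are cosmetic (you treat $k=0$ separately and apply the $(N-2)$--dimensional Hardy inequality to one copy of $\int_0^\infty (u_k')^2\psi^{N-3}\,{\rm d}r$ rather than both), and your final cancellation $\lambda_k-4$, $N-4$ is a valid, slightly different arrangement of the same estimates.
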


\section{Some prototype model manifolds} \label{prototypesMM}
In this section we discuss our first order results on $N$-dimensional Riemannian models $\,(M,g)$ with $N\geq 3$ and with metric $g$ as given in \eqref{meetric} with $\psi$ satisfying condition \eqref{psi} and the further condition for large $r$:
\begin{equation} \label{prototype1}
\psi(r)=A e^{br^{a+1}} \quad \text{ for } r \geq R
\end{equation}
or
\begin{equation} \label{prototype2}
 \psi(r)=A re^{br^{a+1}} \quad \text{ for } r \geq R
\end{equation}
for some $R,A,b>0$ and $a\geq -1$. The case \eqref{prototype1} includes $\hn$ (for $a=0$), while the case \eqref{prototype2} includes $\rn$ (for $a=-1$). In both cases the sectional curvatures satisfy
$$K_{\pi,r}^{rad}\sim -b^2 (a+1)^2 r^{2a} \quad \text{ and } \quad H_{\pi,r}^{rad}\sim -b^2 (a+1)^2 r^{2a}\quad  \text{as } r \rightarrow +\infty \,.$$
Hence, for $a\geq 0$ unbounded curvatures from below are allowed. We refer to \cite[Section 2.3]{gmv} for further possible choices of $\psi$ and their geometric interpretation.
\medskip
\par

 In case \eqref{prototype1}  from Theorem \ref{mainthm-1} we derive the following improved Poincar\'e inequality for functions supported outside $B_R(o)$:

\begin{corollary}\label{hardy-00}
	 Let $\,(M,g)$ be an $N$-dimensional Riemannian model with $N\geq 3$ and with metric $g$ as given in \eqref{meetric} with $\psi$ satisfying conditions \eqref{psi} and \eqref{prototype1} for some $R,A,b>0$ and $a\geq 0$. Then, for all  $u \in C_c^{\infty}(M \setminus B_R(o)),$ there holds
	
			\begin{align}\label{proto}
\int_{M} \left(\frac{\partial u}{\partial r} \right)^2 \, {\rm d}v_g 
& \geq \left(\frac{N-1}{2} \right)^2(a+1)^2 b^2 \int_{M} r^{2a} \, u^2 \, {\rm d}v_g + \frac{1}{4} \int_{M} \frac{u^2}{r^2} \, {\rm d}v_g  \\ \notag
& + \frac{2ba(a+1)(N-1)}{4} \int_{M} \, r^{a-1} u^2 \, {\rm d}v_g.
\end{align}		
	 \end{corollary}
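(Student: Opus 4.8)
The plan is to deduce the inequality directly from Theorem~\ref{mainthm-1}, after computing the quantity $\Lambda_{\pi,r}^{rad}$ explicitly for the profile \eqref{prototype1}. Indeed, every $u\in C_c^\infty(M\setminus B_R(o))$ is \emph{a fortiori} admissible in \eqref{eq_mainthm-1}, and since the weights $\Lambda_{\pi,r}^{rad}$, $1/r^2$ and $1/\psi^2$ only matter on $\{r\ge R\}$, which contains the support of $u$, we may freely use there the explicit expression $\psi(r)=Ae^{br^{a+1}}$.

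First I would differentiate: for $r\ge R$ one has $\psi'(r)=b(a+1)r^{a}\,\psi(r)$ and hence
\[
\psi''(r)=\big(b^2(a+1)^2 r^{2a}+ab(a+1)r^{a-1}\big)\,\psi(r)\,.
\]
Plugging this into \eqref{curvature} gives, on $\{r\ge R\}$, the identities $K_{\pi,r}^{rad}=-b^2(a+1)^2 r^{2a}-ab(a+1)r^{a-1}$ and $H_{\pi,r}^{tan}=-b^2(a+1)^2 r^{2a}+\psi^{-2}$, so that by the definition \eqref{lambdarad},
\[
\Lambda_{\pi,r}^{rad}=-2K_{\pi,r}^{rad}-(N-3)H_{\pi,r}^{tan}=(N-1)b^2(a+1)^2 r^{2a}+2ab(a+1)r^{a-1}-\frac{N-3}{\psi^2}\qquad\text{for }r\ge R\,.
\]

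The final step is to substitute this identity into \eqref{eq_mainthm-1}, after moving the $\Lambda_{\pi,r}^{rad}$-term to the right-hand side. The only point deserving attention is the cancellation: the summand $-\tfrac{N-3}{\psi^2}$ of $\Lambda_{\pi,r}^{rad}$ contributes exactly $-\tfrac{(N-1)(N-3)}{4}\int_M \tfrac{u^2}{\psi^2}\,{\rm d}v_g$, which annihilates the term $\tfrac{(N-1)(N-3)}{4}\int_M \tfrac{u^2}{\psi^2}\,{\rm d}v_g$ already present on the right-hand side of \eqref{eq_mainthm-1}; what remains, upon writing $\tfrac{(N-1)^2}{4}=\big(\tfrac{N-1}{2}\big)^2$, is precisely \eqref{proto}. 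I do not expect any genuine obstacle here: the content is the explicit curvature computation together with the observation of this cancellation---which is, incidentally, the reason why the profile \eqref{prototype1}, and not \eqref{prototype2} (for which the $1/\psi^2$ contributions would no longer match), is the one yielding so clean a statement.
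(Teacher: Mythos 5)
Your proposal is correct and follows essentially the same route as the paper: compute $K_{\pi,r}^{rad}$, $H_{\pi,r}^{tan}$ and hence $\Lambda_{\pi,r}^{rad}$ explicitly for the profile \eqref{prototype1} on $\{r\ge R\}$, substitute into \eqref{eq_mainthm-1}, and rearrange, with the $(N-3)\psi^{-2}$ contribution cancelling the last term on the right-hand side. Your computed expressions agree with those in the paper's proof, and your explicit remark about the cancellation is exactly the ``summing up and rearranging'' the paper leaves implicit.
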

	Notice that \eqref{proto} can be seen as an improved Poincar\'e inequality since $\int_{M} r^{2a} \, u^2 \, {\rm d}v_g\geq \int_{M} u^2 \, {\rm d}v_g$ for $a\geq 0$ and $u \in C_c^{\infty}(M \setminus B_R(o))$ with $R\geq 1$, for instance. In particular, for $a=0$ and $b=1$ we recover the sharp Poincar\'e constant in $\hn$, i.e. $\left(\frac{N-1}{2} \right)^2$.
 \begin{proof}
 For $r \geq R$ we compute:
	
		$$
		K_{\pi,r}^{rad}= -b^2 (a+1)^2 r^{2a} - b a(a+1) r^{a-1}\,,\quad 
		H_{\pi,r}^{tan}= - b^2 (a+1)^2 r^{2a} + \frac{1}{A^2 e^{2br^{a+1}}} 
		$$
		and
		$$\Lambda_{\pi,r}^{rad}=2ba(a+1)r^{a-1}+b^2 (a+1)^2(N-1) r^{2a}-\frac{N-3}{ A^2e^{2br^{a+1}}} \,.$$
		Then, the desired inequality comes by inserting the above term into \eqref{eq_mainthm-1}, summing up and rearranging all terms.

 \end{proof}

In case \eqref{prototype2}  from Theorem \ref{mainthm-1} we derive an improved Hardy inequality for functions supported outside $B_R(o)$:

\begin{corollary}\label{hardy-0}
	 Let $\,(M,g)$ be an $N$-dimensional Riemannian model with $N\geq 3$ and with metric $g$ as given in \eqref{meetric} with $\psi$ satisfying conditions \eqref{psi} and \eqref{prototype2} for some $R,A,b>0$ and $a\geq -1$. Then, for all  $u \in C_c^{\infty}(M \setminus B_R(o)),$ there holds
	
			\begin{align}\label{proto2}
\int_{M} \left(\frac{\partial u}{\partial r} \right)^2 \, {\rm d}v_g 
& \geq \frac{(N-2)^2}{4} \int_{M} \frac{u^2}{r^2} \, {\rm d}v_{g} +  \frac{(a+1)^2 b^2 (N-1)^2}{4} \int_{M} r^{2a} \, u^2 \, {\rm d}v_g   \\ \notag
& + \frac{b(a+1)(N-1)(N-1+a)}{2} \int_{M} \, r^{a-1} u^2 \, {\rm d}v_g.
\end{align}		
	 \end{corollary}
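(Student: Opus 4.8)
The argument will parallel the proof of Corollary \ref{hardy-00}: everything follows by feeding the relevant curvature quantities into Theorem \ref{mainthm-1} and rearranging. First I would record, for $r\geq R$ and $\psi(r)=Are^{br^{a+1}}$, the logarithmic derivative $\psi'/\psi=\tfrac1r+b(a+1)r^a$, whence
\[
\frac{\psi''}{\psi}=\Big(\frac{\psi'}{\psi}\Big)'+\Big(\frac{\psi'}{\psi}\Big)^2=b(a+1)(a+2)\,r^{a-1}+b^2(a+1)^2\,r^{2a}.
\]
Via \eqref{curvature} this gives
\[
K_{\pi,r}^{rad}=-b(a+1)(a+2)\,r^{a-1}-b^2(a+1)^2\,r^{2a},\qquad
H_{\pi,r}^{tan}=-\frac1{r^2}-2b(a+1)\,r^{a-1}-b^2(a+1)^2\,r^{2a}+\frac1{\psi^2},
\]
and therefore, by \eqref{lambdarad},
\[
\Lambda_{\pi,r}^{rad}=2b(a+1)(N+a-1)\,r^{a-1}+b^2(a+1)^2(N-1)\,r^{2a}+\frac{N-3}{r^2}-\frac{N-3}{\psi^2}.
\]

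Next I would substitute this expression into inequality \eqref{eq_mainthm-1}, moving the $\Lambda_{\pi,r}^{rad}$ integral to the right-hand side. The decisive point is that the term $-\tfrac{N-3}{\psi^2}$ inside $\tfrac{N-1}{4}\Lambda_{\pi,r}^{rad}$ exactly cancels the remainder $\tfrac{(N-1)(N-3)}{4}\int_M \psi^{-2}u^2\,{\rm d}v_g$ appearing on the right of \eqref{eq_mainthm-1}, so the exponentially decaying weight disappears altogether. The two $r^{-2}$ contributions recombine as $\tfrac{(N-1)(N-3)}{4}+\tfrac14=\tfrac{(N-2)^2}{4}$, while the $r^{a-1}$ and $r^{2a}$ pieces of $\tfrac{N-1}{4}\Lambda_{\pi,r}^{rad}$ produce precisely the constants $\tfrac{b(a+1)(N-1)(N-1+a)}{2}$ and $\tfrac{(a+1)^2b^2(N-1)^2}{4}$ of \eqref{proto2}. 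Since $u$ is supported in $M\setminus B_R(o)$, only the values of $\psi$ for $r\geq R$ enter, so the formulas above are all that is needed and no difficulty arises at the pole.

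I do not anticipate any genuine obstacle: the proof is a direct computation of the curvature invariants followed by algebraic bookkeeping. The only mild subtlety is keeping track of the cancellation of the $\psi^{-2}$ weight and of the recombination of the $r^{-2}$ coefficients into $(N-2)^2/4$. As a sanity check, in the limiting case $a=-1$ (where $\psi(r)=Ar$ and $M=\rn$) the two extra weights vanish and one recovers the classical strong Hardy inequality with sharp constant $(N-2)^2/4$, consistently with the remark following Theorem \ref{mainthm-1}.
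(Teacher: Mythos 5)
Your proposal is correct and follows essentially the same route as the paper: compute $K_{\pi,r}^{rad}$, $H_{\pi,r}^{tan}$ and $\Lambda_{\pi,r}^{rad}$ for $\psi(r)=Are^{br^{a+1}}$ on $r\geq R$, insert into \eqref{eq_mainthm-1}, and observe the cancellation of the $\psi^{-2}$ terms and the recombination $\tfrac{(N-1)(N-3)}{4}+\tfrac14=\tfrac{(N-2)^2}{4}$. In fact your expression $\Lambda_{\pi,r}^{rad}=2b(a+1)(N+a-1)r^{a-1}+\cdots$ is the correct one (the paper's displayed formula omits a factor $(a+1)$ in the $r^{a-1}$ coefficient, evidently a typo, since the final constant $\tfrac{b(a+1)(N-1)(N-1+a)}{2}$ in \eqref{proto2} matches your computation).
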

	 When $a=-1$ the decay of the manifold is of euclidean type and \eqref{proto2} reduces to the standard Hardy inequality. On the other hand, when $a\geq 0$, since $\int_{M} r^{2a} \, u^2 \, {\rm d}v_g\geq \int_{M} u^2 \, {\rm d}v_g$ for all $u \in C_c^{\infty}(M \setminus B_R(o))$ with $R\geq 1$, \eqref{proto2} reads as an improved Hardy-Poincar\'e inequality.
 \begin{proof}
 For $r \geq R$ we compute:
	
		$$
		K_{\pi,r}^{rad} = -b^2 (a+1)^2 r^{2a} - b (a+1)(a+2) r^{a-1}\,,\quad 
		H_{\pi,r}^{tan} = -b^2 (a+1)^2 r^{2a}- 2b (a+1) r^{a-1}- \frac{1}{r^2} + \frac{1}{A^2 r^2e^{2br^{a+1}}} 
		$$
		
		and
		$$\Lambda_{\pi,r}^{rad}=2b(N-1+a)r^{a-1}+b^2 (a+1)^2(N-1) r^{2a}+\frac{N-3}{r^2}-\frac{N-3}{A^2 r^2e^{2br^{a+1}}}\,.$$
		Then, the \eqref{proto2}  comes by inserting the above term into \eqref{eq_mainthm-1}, summing up and rearranging all terms.

 \end{proof}

A further remarkable consequence of Theorem \ref{mainthm-1} is the following improved Hardy inequality:

\begin{corollary}\label{hardy-2}
	 Let $\,(M,g)$ be an $N$-dimensional Riemannian model with $N\geq 3$ and with metric $g$ as given in \eqref{meetric} with $\psi(r)=r e^{{r^{2m}}}$ for some positive integer $m$.
Then, for all  $ u \in C_c^{\infty}(M \setminus\{0\})$ there holds
		\begin{equation}\label{hardy-1}
			\begin{split}
				\int_{M} \rgt \, {\rm d}v_{g} \geq \frac{(N-2)^2}{4} \int_{M} \frac{u^2}{r^2} \, {\rm d}v_{g} 
				+m^2 (N-1)^2 \int_{M}  r^{4m-2}\,u^2 \, {\rm d}v_{g}
				\\+m (N-1)(N-2+2m) \int_{M}  r^{2m-2}\,u^2 \, {\rm d}v_{g} \,,
			\end{split}
		\end{equation}
		where the constant $\frac{(N-2)^2}{4}$ is sharp.
	 \end{corollary}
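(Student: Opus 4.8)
The strategy is to apply Theorem \ref{mainthm-1} with the explicit choice $\psi(r) = r\,e^{r^{2m}}$ and then make all the curvature quantities completely explicit. First I would check that $\psi(r) = r\,e^{r^{2m}}$ satisfies \eqref{psi}: it is smooth and nonnegative on $[0,+\infty)$, positive on $(0,+\infty)$, and near the origin $\psi(r) = r + r^{2m+1} + \tfrac12 r^{4m+1} + \cdots$, so $\psi'(0) = 1$ and all even-order derivatives vanish at $0$ (since $2m+1$, $4m+1$, etc.\ are odd and $m$ is a positive integer). Hence Theorem \ref{mainthm-1} applies and \eqref{eq_mainthm-1} holds for all $u \in C_c^\infty(M\setminus\{0\})$.

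Next I would compute $\psi'/\psi$, $\psi''/\psi$, and the sectional curvatures. From $\log\psi(r) = \log r + r^{2m}$ we get $\tfrac{\psi'}{\psi} = \tfrac1r + 2m\,r^{2m-1}$. Differentiating once more and using \eqref{curvature},
\[
K_{\pi,r}^{rad} = -\frac{\psi''}{\psi} = -\left(\frac{\psi'}{\psi}\right)' - \left(\frac{\psi'}{\psi}\right)^2 = -\left(-\frac{1}{r^2} + 2m(2m-1)r^{2m-2}\right) - \left(\frac1r + 2m r^{2m-1}\right)^2,
\]
which after expansion simplifies to $K_{\pi,r}^{rad} = -4m^2 r^{4m-2} - 2m(2m+1)r^{2m-2}$; here the two $-\tfrac{1}{r^2}$ contributions and the cross term $-\tfrac{4m}{r}r^{2m-1}$ combine cleanly. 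Similarly $H_{\pi,r}^{tan} = -\tfrac{(\psi')^2 - 1}{\psi^2}$; writing $\psi' = e^{r^{2m}}(1 + 2m r^{2m})$ one finds $(\psi')^2 = e^{2r^{2m}}(1 + 2m r^{2m})^2$ and $\psi^2 = r^2 e^{2r^{2m}}$, so $H_{\pi,r}^{tan} = -4m^2 r^{4m-2} - 4m r^{2m-2} - \tfrac{1}{r^2} + \tfrac{1}{r^2 e^{2r^{2m}}}$. Then $\Lambda_{\pi,r}^{rad} = -2K_{\pi,r}^{rad} - (N-3)H_{\pi,r}^{tan}$ is a finite linear combination of $r^{4m-2}$, $r^{2m-2}$, $r^{-2}$, and $r^{-2}e^{-2r^{2m}}$; in particular the coefficient of $r^{4m-2}$ is $8m^2 + 4m^2(N-3) = 4m^2(N-1)$, the coefficient of $r^{2m-2}$ is $4m(2m+1) + 4m(N-3) = 4m(N-1+2m)$, and the $r^{-2}$ terms carry coefficient $N-3$ (plus the exponentially decaying piece).

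Finally I would substitute this expression for $\Lambda_{\pi,r}^{rad}$ into \eqref{eq_mainthm-1}, multiply by $\tfrac{N-1}{4}$, and bring everything to the right-hand side: the $\tfrac14 \int_M \tfrac{u^2}{r^2}$ term on the right of \eqref{eq_mainthm-1} combines with the $\tfrac{N-1}{4}\cdot(N-3)\int_M \tfrac{u^2}{r^2}$ coming from $\Lambda_{\pi,r}^{rad}$ to give $\tfrac{1 + (N-1)(N-3)}{4} = \tfrac{(N-2)^2}{4}$ times $\int_M \tfrac{u^2}{r^2}$. The term $\tfrac{(N-1)(N-3)}{4}\int_M \tfrac{u^2}{\psi^2}\,{\rm d}v_g$ with $\psi^2 = r^2 e^{2r^{2m}}$ exactly cancels the exponentially decaying contribution $-\tfrac{N-1}{4}\cdot\tfrac{N-3}{r^2 e^{2r^{2m}}}$ from $\Lambda_{\pi,r}^{rad}$ (note the sign of that contribution: since $H_{\pi,r}^{tan}$ contains $+\tfrac{1}{r^2 e^{2r^{2m}}}$, the term $-(N-3)H_{\pi,r}^{tan}$ contributes $-\tfrac{N-3}{r^2 e^{2r^{2m}}}$, hence a $-\tfrac{(N-1)(N-3)}{4}\int_M \tfrac{u^2}{r^2 e^{2r^{2m}}}$ after multiplication, cancelling the $\psi^{-2}$ term). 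What remains is precisely the weights $m^2(N-1)^2 r^{4m-2}$ and $m(N-1)(N-2+2m)r^{2m-2}$, yielding \eqref{hardy-1}. For the sharpness of $\tfrac{(N-2)^2}{4}$: since $\psi(r) = r e^{r^{2m}}$ satisfies \eqref{con2} (as $K_{\pi,r}^{rad} \le -1$ for $r$ away from $0$, and near $0$ one checks $-\psi''/\psi \to 0$, so in fact \eqref{con2} needs care — I would instead invoke that the weight $r^{-2}$ dominates near the pole and the claimed infimum over the $r^{-2}$ quotient is governed by the local Euclidean Hardy behavior) the constant $(N-2)^2/4$ is the optimal Hardy constant near the pole; concretely one tests \eqref{hardy-1} with radial functions concentrating at $o$ of the form $u_\varepsilon(r) = r^{-(N-2)/2 + \varepsilon}\chi(r)$ and lets $\varepsilon \to 0$, using that the remaining weights $r^{4m-2}, r^{2m-2}$ are integrable against $u_\varepsilon^2$ near $0$ and thus do not affect the leading divergence.

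The routine part is the curvature bookkeeping; the one place demanding genuine care is the \textbf{sharpness claim}, since Theorem \ref{mainthm-1}'s sharpness statement concerns the constant $\tfrac14$ in front of $r^{-2}$ under hypotheses \eqref{con2}–\eqref{con2bis}, whereas here the relevant optimal constant is the reconstituted $\tfrac{(N-2)^2}{4}$, so the sharpness must be argued directly by a concentration-at-the-pole test-function argument as indicated, rather than quoted verbatim from the theorem.
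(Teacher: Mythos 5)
Your derivation of \eqref{hardy-1} is essentially the paper's proof: verify \eqref{psi}, compute $K_{\pi,r}^{rad}$, $H_{\pi,r}^{tan}$ and $\Lambda_{\pi,r}^{rad}$ explicitly for $\psi(r)=r e^{r^{2m}}$ (your expressions agree with the paper's), substitute into \eqref{eq_mainthm-1} and rearrange, with the $r^{-2}$ pieces recombining into $\tfrac{(N-2)^2}{4}$ and the $\psi^{-2}$ term cancelling the exponentially decaying part of $\Lambda_{\pi,r}^{rad}$. One small slip: in your bookkeeping the coefficient of $r^{2m-2}$ in $\Lambda_{\pi,r}^{rad}$ should be $4m(2m+1)+4m(N-3)=4m(N-2+2m)$, not $4m(N-1+2m)$; your final stated weight $m(N-1)(N-2+2m)$ is the correct one, so this is only a typo in the intermediate line. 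The only genuine divergence is the sharpness of $\tfrac{(N-2)^2}{4}$: the paper disposes of it by citing Carron's results on the optimal Hardy constant, whereas you build a concentrating family $u_\varepsilon(r)=r^{-(N-2)/2+\varepsilon}\chi(r)$ at the pole and observe that the weights $r^{4m-2}$, $r^{2m-2}$ are integrable against $u_\varepsilon^2\,\psi^{N-1}$ near $0$ and hence do not affect the leading $\varepsilon^{-1}$ divergence of the Hardy quotient. Your route is more self-contained and is correct (since $\psi(r)\sim r$ near the pole the volume element is asymptotically Euclidean, so the computation goes through); you are also right to be wary of invoking \eqref{con2} here, as $K_{\pi,r}^{rad}\to 0$ near the origin when $m\geq 2$, and right that the theorem's sharpness statement concerns $\tfrac14$ rather than the reconstituted constant, so a separate argument (yours or the citation) is indeed needed.
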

	 \begin{remark}
	 It's worth noticing that, under the assumption of Corollary \ref{hardy-2}, we have:
	 	 $$\frac{(N-1)}{4} \Lambda_{\pi,r}^{rad}=\frac{(N-1)(N-3)}{4} \frac{1}{r^2}- \frac{(N-1)(N-3)}{4} \frac{1}{\psi^2(r)}+f_{m,N}(r) \,,$$
		 for some $f_{m,N}(r)  >0$.
	 Once this observed, we readily infer that, as stated in Theorem \ref{mainthm-1}, the constant in front of the term $\frac{1}{r^2}$ on the right hand side of \eqref {eq_mainthm-1} cannot be larger than $\frac{1}{4}$ otherwise we would contradict the sharpness of the Hardy constant $\frac{(N-2)^2}{4}$.
	 \end{remark}
	 \begin{proof}
	 Clearly, the function $\psi(r)=r e^{{r^{2m}}}$ satisfies condition \eqref{psi}, hence Theorem \ref{mainthm-1} applies. On the other hand, some computations yield:
		
		$$
		K_{\pi,r}^{rad}=-(2m)^2 r^{4m-2} -2m(2m+1) r^{2m-2}\,,\quad
		H_{\pi,r}^{tan}=-(2m)^2 r^{4m-2}- \frac{1}{r^2} -4m r^{2m-2} + \frac{1}{r^2 e^{2r^{2m}}}\,,
		$$
		and
		$$\Lambda_{\pi,r}^{rad}=(2m)^2(N-1)  r^{4m-2} +4m(N-2+2m) r^{2m-2}+\frac{(N-3)}{r^2}- \frac{(N-3)}{r^2 e^{2r^{2m}}}\,.$$
		Finally, inequality \eqref{hardy-1} comes by inserting the above term into \eqref{eq_mainthm-1}, summing up and rearranging all terms. As for the sharpness of the constant $\frac{(N-2)^2}{4}$, it comes from the results in \cite{Carron}.

	 \end{proof}

	\medskip

	\section{Proofs of Theorem~\ref{mainthm-1}, Corollary \ref{use_cor_1} and Corollary \ref{use_cor_1}}\label{3}
	
	\subsection{Proof of Theorem~\ref{mainthm-1}}
	This section is devoted to the proof of Theorem~\ref{mainthm-1}. Before starting we recall the following useful lemma that can be easily derived from the Sturm-Comparison Theorem, see \cite[Lemma 2.1]{PRS} and that we will repeatedly exploit in our proofs.
		\begin{lemma}\label{sturm}
		 Let $\,(M,g)$ be an $N$-dimensional Riemannian model with metric $g$ as given in \eqref{meetric} with $\psi$ satisfying \eqref{psi}.
		If \eqref{con2} holds, then
		\begin{align}\label{sturm_comp}
			\frac{\psi^{\prime}(r)}{\psi(r)} \geq \coth r \quad  \text{and}\quad \psi(r) \geq \sinh r \quad \forall\, r>0\,.
		\end{align}

	\end{lemma}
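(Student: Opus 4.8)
The plan is to compare $\psi$ with the model solution $\sinh r$ by a Wronskian argument. Recall that condition \eqref{con2} reads $-\psi''/\psi\le -1$ on $(0,+\infty)$, i.e. $\psi''\ge\psi$ there, while $v(r):=\sinh r$ solves $v''=v$ with exactly the initial data $v(0)=0$, $v'(0)=1$ imposed on $\psi$ by \eqref{psi}. First I would record two elementary consequences of \eqref{psi}--\eqref{con2}: since $\psi\ge 0$ we get $\psi''\ge 0$, so $\psi'$ is nondecreasing and hence $\psi'(r)\ge\psi'(0)=1>0$ for all $r\ge 0$; and $\psi>0$ on $(0,+\infty)$ directly from \eqref{psi}. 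These make the divisions below legitimate.

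Next I would introduce the Wronskian $W(r):=\psi'(r)\sinh r-\psi(r)\cosh r$. Differentiating and using $(\sinh r)''=\sinh r$ gives $W'(r)=\psi''(r)\sinh r-\psi(r)\sinh r=\bigl(\psi''(r)-\psi(r)\bigr)\sinh r\ge 0$ for $r>0$, by \eqref{con2}. Since $W(0)=\psi'(0)\cdot 0-\psi(0)\cdot 1=0$, it follows that $W(r)\ge 0$ for all $r\ge 0$, that is, $\psi'(r)\sinh r\ge\psi(r)\cosh r$. Dividing through by $\psi(r)\sinh r>0$ yields the first claimed inequality $\psi'(r)/\psi(r)\ge\coth r$ for all $r>0$.

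For the second inequality I would simply note that $\bigl(\psi(r)/\sinh r\bigr)'=W(r)/\sinh^2 r\ge 0$ on $(0,+\infty)$, so $r\mapsto\psi(r)/\sinh r$ is nondecreasing there. Since $\psi(0)=\sinh 0=0$ with $\psi'(0)=1$ and $\cosh 0=1$, de l'H\^opital's rule (equivalently, the Taylor expansion of $\psi$ furnished by \eqref{psi}) gives $\lim_{r\to 0^+}\psi(r)/\sinh r=1$; combined with monotonicity this forces $\psi(r)/\sinh r\ge 1$, i.e. $\psi(r)\ge\sinh r$ for all $r>0$. Alternatively one can integrate the first inequality, writing $\coth r=(\log\sinh r)'$, over $[\varepsilon,r]$ and let $\varepsilon\to 0^+$ using the same limit to pin down the constant.

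I do not expect any genuine obstacle here: the argument is entirely elementary once the comparison function $\sinh r$ and the Wronskian $W$ are chosen, and everything reduces to the sign of $\psi''-\psi$. The only point that deserves a little care is the behaviour at the pole $r=0$, where the normalization $\psi(0)=0$, $\psi'(0)=1$ from \eqref{psi} is exactly what is needed to evaluate the limit of $\psi/\sinh r$ (equivalently, to fix the integration constant) and thereby turn the differential inequality into the pointwise bounds claimed.
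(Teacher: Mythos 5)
Your proof is correct: the Wronskian $W=\psi'\sinh r-\psi\cosh r$ satisfies $W'=(\psi''-\psi)\sinh r\ge 0$ and $W(0)=0$, which yields both inequalities exactly as you describe, and the normalization at the pole from \eqref{psi} is handled properly. This is essentially the same argument as the paper's, which does not spell out a proof but simply invokes the Sturm comparison theorem (citing \cite[Lemma 2.1]{PRS}), whose standard proof is precisely this Wronskian/Riccati comparison.
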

	
Next we discuss some basic facts on spherical harmonics. More details can be found in \cite{MSS}. 
	
	\medskip 	

	{\bf Spherical harmonics.}
	
	Let $u(x)=u(r,\sigma)\in C_c^\infty(M)$, $r\in[{0},\infty)$ and $\sigma\in \mathbb{S}^{N-1}$, we can write
	
	\begin{equation*}
		u(x):=u(r,\sigma)=\sum_{n=0}^{\infty}d_n(r)P_n(\sigma)
	\end{equation*}
	
	in $L^2(M)$, where $\{ P_n \}$ is an orthonormal system of spherical harmonics and 
	
	\begin{equation*}
		d_n(r)=\int_{\mathbb{S}^{N-1}}u(r,\sigma)P_n(\sigma) \ {\rm d}\sigma\,.
	\end{equation*} 
	
	A spherical harmonic $P_n$ of order $n$ is the restriction to $\mathbb{S}^{N-1}$ of a homogeneous harmonic polynomial of degree $n.$ Moreover, it
	satisfies $$-\Delta_{\mathbb{S}^{N-1}}P_n=\lambda_n P_n$$
	for all $n\in\mathbb{N}_0$ where $\lambda_n=(n^2+(N-2)n)$ are the eigenvalues of Laplace Beltrami operator $-\Delta_{\mathbb{S}^{N-1}}$ on $\mathbb{S}^{N-1}$ with corresponding eigenspace dimension $c_n$. We note that $\lambda_n\geq 0$, $\lambda_0=0$, $c_0=1$, $c_1=N$ and 
	
	\begin{equation*}
		c_n=\binom{N+n-1}{n}-\binom{N+n-3}{n-2}
	\end{equation*}
	
	for $n\geq 2$. 
	
		\medskip 
	
	We are now ready to begin the proof. 
	
	\medskip 
	
	{\bf Proof of \eqref{eq_mainthm-1}.}  The proof is divided into two steps.
	
	\medskip 
{\bf Step 1. }
 Let $u\in C_c^\infty(M \setminus \{ o \}),$ where $o$ denotes the pole, and define the transformation
\begin{equation*}
v(x)=\psi(r)^{\frac{(N-1)}{2}}u(x) \text{ where } r=\rho(x,o) \text{ and } x=(r,\sigma)\in (0,\infty)\times \mathbb{S}^{N-1}\,.
\end{equation*}
Then clearly $v \in C_c^{\infty}(M \setminus \{ o \}).$

\medskip

 An easy calculation gives 

\begin{align*}
\frac{\partial v }{\partial r} = \frac{\partial }{\partial r}\bigg(\psi(r)^{\frac{(N-1)}{2}}u \bigg)= \psi(r)^{\frac{(N-1)}{2}}\frac{\partial u}{\partial r} + \frac{(N-1)}{2}\frac{\psi'(r)}{\psi(r)}\:v\,.
\end{align*}

By arranging the terms we obtain:
 
\begin{equation}\label{Th-2.1_step_1}
\frac{\partial u }{\partial r}=\frac{1}{\psi(r)^{\frac{(N-1)}{2}}}\bigg[\frac{\partial v }{\partial r} -\frac{(N-1)}{2}\frac{\psi'(r)}{\psi(r)}\:v\bigg]\,.
\end{equation}

\medskip

{\bf Step 2.}   	

Now, expanding $v$ in terms of spherical harmonics:
$$v(x)=v(r,\sigma)=\sum_{n=0}^{\infty}d_n(r)P_n(\sigma)\,,$$
we find
$$\frac{\partial v }{\partial r}=\sum_{n=0}^{\infty}d'_n(r)P_n(\sigma)\,.$$

Furthermore, from \eqref{Th-2.1_step_1} we observe that

\begin{align}\label{Th-2.1_step_2_0}
\int_{M} \rgt \ {\rm d}v_{g} &= \int_{M}\frac{1}{\psi ^{(N-1)}}\bigg(\frac{\partial v}{\partial r}\bigg)^2 \ {\rm d}v_{g}-(N-1)\int_{M}\frac{1}{\psi^{(N-1)}}\frac{\partial v }{\partial r} \frac{\psi'}{\psi}\:v \ {\rm d}v_{g}\nonumber\\&+ \frac{(N-1)^2}{4}\int_{M}\frac{1}{\psi^{(N-1)}}\frac{\psi'^2}{\psi^2}\: v^2 \ {\rm d}v_{g}\,.
\end{align} 

\medskip

We will evaluate each term of \eqref{Th-2.1_step_2_0} separately. Using the integration by parts formula and the orthonormal properties of $\{P_n\}$ i.e $\int_{\mathbb{S}^{N-1}}P_nP_m\:{\rm d}\sigma=\delta_{nm}$, we find

\begin{align}\label{Th-2.1_step_2_1}
\int_{M}\frac{1}{{\psi}^{(N-1)}}\bigg(\frac{\partial v}{\partial r}\bigg)^2 \ {\rm d}v_{g} &= \sum_{n=0}^{\infty}\int_{0}^{\infty}(d'_n)^2 \ {\rm d}r
\end{align}

\begin{align}\label{Th-2.1_step_2_2}
\int_{M}\frac{1}{\psi^{(N-1)}}\frac{\partial v }{\partial r} \frac{\psi'}{\psi}\:v \ {\rm d}v_{g}  =-\frac{1}{2}\sum_{n=0}^{\infty}\int_{0}^{\infty}d_n^2\:\frac{\psi\psi''-\psi'^2}{\psi^2} \ {\rm d}r
\end{align}

\begin{align}\label{Th-2.1_step_2_3}
\int_{M}\frac{1}{\psi^{(N-1)}}\frac{\psi'^2}{\psi^2}\: v^2 \ {\rm d}v_{g} &=\sum_{n=0}^{\infty}\int_{0}^{\infty}d_n^2\:\frac{\psi'^2}{\psi^2} \ {\rm d}r\,.
\end{align}
By substituting \eqref{Th-2.1_step_2_1}, \eqref{Th-2.1_step_2_2} and \eqref{Th-2.1_step_2_3} into \eqref{Th-2.1_step_2_0} and after simplifying, we obtain 

\begin{align}\label{Th-2.1_step_2_4}
\int_{M} \rgt \ {\rm d}v_{g}&=\sum_{n=0}^{\infty}\int_{0}^{\infty}(d'_n)^2 \ {\rm d}r+\frac{(N-1)}{2}\sum_{n=0}^{\infty}\int_{0}^{\infty}d_n^2\,\frac{\psi\psi''-\psi'^2}{\psi^2} \ {\rm d}r\\ \notag
&+\frac{(N-1)^2}{4}\sum_{n=0}^{\infty}\int_{0}^{\infty}d_n^2\,\frac{\psi'^2}{\psi^2} \ {\rm d}r\nonumber\\&=\sum_{n=0}^{\infty}\int_{0}^{\infty}(d'_n)^2 \ {\rm d}r -\frac{(N-1)}{4}\sum_{n=0}^{\infty}\int_{0}^{\infty}
\big[2K_{\pi,r}^{rad}+(N-3)H_{\pi,r}^{tan}\big]d_n^2 \ {\rm d}r\notag \\ &+ \frac{(N-1)(N-3)}{4}\sum_{n=0}^{\infty}\int_{0}^{\infty} \frac{d_n^2}{\psi^2} \ {\rm d}r\,.
\end{align}

 Next, by using the $1$-dimensional Hardy inequality:
\begin{equation*}
\int_{0}^{\infty} (d'_n)^2 \ {\rm d}r\geq\frac{1}{4} \int_{0}^{\infty} \frac{d_n^2}{r^2} \ {\rm d}r
\end{equation*}
into \eqref{Th-2.1_step_2_4} we get 
\begin{align}\label{Th-2.1_step_2_5}
\int_{M} \rgt \ {\rm d}v_{g}&\geq  \frac{1}{4}\sum_{n=0}^{\infty}\int_{0}^{\infty} \frac{d_n^2}{r^2} \ {\rm d}r -\frac{(N-1)}{4}\sum_{n=0}^{\infty}\int_{0}^{\infty}
\big[2K_{\pi,r}^{rad}+(N-3)H_{\pi,r}^{tan}\big]d_n^2 \ {\rm d}r\nonumber\\&+\frac{(N-1)(N-3)}{4}\sum_{n=0}^{\infty}\int_{0}^{\infty}\frac{d_n^2}{\psi^2} \ {\rm d}r\,.
\end{align}
Finally, inserting the equalities:
\begin{align*}
\int_{M} \big[2K_{\pi,r}^{rad}+(N-3)H_{\pi,r}^{tan}\big]  u^2 \ {\rm d}v_{g}=\sum_{n=0}^{\infty}\int_{0}^{\infty} \big[2K_{\pi,r}^{rad}+(N-3)H_{\pi,r}^{tan}\big] d_n^2 \ {\rm d}r\,,
\end{align*}
\begin{align*}
\int_{M}\frac{u^2}{r^2} \ {\rm d}v_{g}=\sum_{n=0}^{\infty}\int_{0}^{\infty}\frac{d_n^2}{r^2} \ {\rm d}r\: \text{and}\: \int_{M}\frac{u^2}{\psi^2} \ {\rm d}v_{g}=\sum_{n=0}^{\infty}\int_{0}^{\infty}\frac{d_n^2}{\psi^2} \ {\rm d}r
\end{align*}
 into \eqref{Th-2.1_step_2_5} and recalling \eqref{lambdarad} we obtain the desired inequality \eqref{eq_mainthm-1} with $u \in C_c^{\infty}(M \setminus \{ o \}).$ 
\par
\begin{remark} \label{density}

If \eqref{con2} holds inequality \eqref{eq_mainthm-1} can be extended to functions belonging to $C_c^{\infty}(M)$ by density arguments. Indeed, in this case $M$ is a Cartan-Hadamard manifold with strictly negative curvatures and, since, for $N > 2,$ the set $\{ o\}$ is compact and has zero capacity, the following inclusion holds: $\overline{C_c^{\infty}(M)}^{\|\nabla \cdot \|_2} \subset \overline{C_c^{\infty}(M  \setminus \{ o \})}^{\|\nabla \cdot \|_2}$ (see \cite[Proposition~A.1 and Theorem 6.5]{Dambrosio}).
 Then, by using Gauss Lemma $|\frac{\partial u}{\partial r}| \leq |\nabla u|,$ one deduces the validity of \eqref{eq_mainthm-1} for all $C_c^{\infty}(M)$.
 \end{remark}

\medskip

{\bf Optimality :} We set

\begin{align}\label{define_C_M}
C_{M}:=\text{inf}_{H^1(M)\setminus\{0\}}\:\frac{\int_{M}(\rg)^2 \ {\rm d}v_{g} - \frac{(N-1)}{4}\int_{M}\Lambda_{\pi,r}^{rad}\,u^2 \ {\rm d}v_{g}}{\int_{M}\frac{u^2}{r^2} \ {\rm d}v_g}\,.
\end{align} 

If \eqref{con2} holds, then $\Lambda_{\pi,r}^{rad}>0$ and, by combining density arguments with Fatou's Lemma, we infer that \eqref{eq_mainthm-1} holds in $H^1(M)$ and, in turn, that $C_{M}\geq \frac{1}{4}$. So it remains to show that $C_{M}\leq \frac{1}{4}$ and this will be done by giving a proper minimizing sequence.  Again we divide the proof in some steps.
\medskip

{\bf Step 1.} 
Let us define the sequence $\{\phi_n\}$ for $n\in\mathbb{N}$ as follows

\begin{equation}\label{main_fun}
\phi_n(r)=
\begin{dcases}
0 & 0<r\leq 1 \\
n^{-\alpha}(r-1) & 1\leq r\leq 2 \\
n^{-\alpha} & 2\leq r\leq n \\
r^{-\alpha} & n\leq r \\
\end{dcases}
\end{equation}

where  $\alpha>1+a$ and $a\geq0$ is as given in \eqref{con2bis}. Clearly, $\phi_n\in L^1_{loc}(0,+\infty)$ and its weak derivative writes
\begin{equation}\label{main_fun2}
\phi_n'(r)=
\begin{dcases}
0 & 0<r\leq 1 \\
n^{-\alpha} & 1\leq r\leq 2 \\
0 & 2\leq r\leq n \\
-\alpha r^{-\alpha-1} & n\leq r \,.
\end{dcases}
\end{equation}

Next we recall that, from  the proof of Proposition 4.1 in \cite{EDG}, the function $u_0(r):=\frac{r^\frac{1}{2}}{\psi^\frac{N-1}{2}}$ satisfies:

\begin{align}\label{critic_eq}
-\Delta_{r,g}\:u_0- \frac{(N-1)}{4}\Lambda_{\pi,r}^{rad}\: u_0=\frac{1}{4}\frac{u_0}{r^2}+\frac{(N-1)(N-3)}{4}\:\frac{u_0}{\psi^2}\quad \text{for }r>0\,.
\end{align}

Using polar coordinates and by exploiting \eqref{con2bis}, it follows that $u_0\phi_n$ and $u_0\phi_n^2$ both belong to $H^1(M)$ for all $n\geq 3$ and for $\alpha>1+a$. In particular, \eqref{con2bis} assures $(u_0\phi_n)',(u_0\phi_n^2)'\in L^2(M)$. Indeed, for $r\geq n$, we have that 
\begin{equation}\label{asymta} 
((u_0\phi_n)')^2 \psi^{N-1}\sim \frac{1}{r^{2\alpha-1}}\left( \left(\alpha-\frac{1}{2}\right)^2 \frac{1}{r^2}+\frac{(N-1)^2}{4}\,C^2\,r^{2a} + (2\alpha-1) \frac{(N-1)}{2}\,C\, r^{a-1}  \right)
\end{equation}

and this term turns out to be integrable at infinity for $\alpha>1+a$, if $\psi$ satisfies the above condition; the term $(u_0\phi_n^2)'$ can be treated similarly.
\medskip

\medskip

{\bf Step 2.}
Let $R>n$, by multiplying the equation \eqref{critic_eq} by $u_0 \phi_n^2$ and integrating by parts, we obtain
\begin{align}\label{useful_eqR}
&\int_{B_R(0)}\bigg( \frac{\partial u_0}{\partial r} \bigg) \bigg( \frac{\partial (u_0\phi_n^2) }{\partial r}\bigg) \ {\rm d}v_{g}- \int_{\partial B_R(0)} \bigg( \frac{\partial u_0}{\partial r} \bigg)\, u_0\phi_n^2  \ {\rm d}S_{g}
- \frac{(N-1)}{4}\int_{B_R(0)}\Lambda_{\pi,r}^{rad} \, (u_0 \phi_n)^2 \ {\rm d}v_{g}\\ \notag
&=\frac{1}{4}\int_{B_R(0)}\frac{(u_0  \phi_n)^2}{r^2} \ {\rm d}v_{g}+\frac{(N-1)(N-3)}{4}\int_{B_R(0)}\frac{(u_0 \phi_n)^2}{\psi^2} \ {\rm d}v_{g}\,,
\end{align}
 where we have exploited the fact that $\phi_n$ is supported outside $B_1(0)$, hence no problem of integrability arises at $r=0$.

Next we note that
\begin{align*}
\int_{\partial B_R(0)} \bigg( \frac{\partial u_0}{\partial r} \bigg)\, u_0\phi_n^2  \ {\rm d}S_{g}&=\int_{\partial B_R(0)} u'_0(R)\, u_0(R)\phi_n^2(R)  \ {\rm d}S_{g}\\
&=\frac{1}{2}\left( 1-(N-1)R \frac{\psi'(R)}{\psi(R)}\right) \frac{1}{R^{\alpha}}=o(1) \quad \text{as } R \rightarrow + \infty
\end{align*}
where in the above we have exploited the fact that $|\partial B_R(0)|=\omega_N\,\psi^{N-1}(R)$. \par
On the other hand, for $r>n$, we have that

\begin{align*}
&\big|\bigg( \frac{\partial u_0}{\partial r} \bigg) \bigg( \frac{\partial (u_0\phi_n^2) }{\partial r}\bigg)\big|=\big|(u_0')^2 \phi_n^2+2u_0u_0' \phi_n \phi'_n\big|\\
&=\bigg|\frac{1}{r^{2\alpha-1}}\bigg(\frac{1}{4}\frac{1}{r^2}+\frac{(N-1)^2}{4}\bigg(\frac{\psi'}{\psi}\bigg)^2+\bigg( \alpha  -\frac{N-1}{2}\bigg)\frac{\psi'}{\psi}\frac{1}{r} -\alpha\frac{1}{r^2}\bigg)\frac{1}{\psi^{N-1}}\bigg| \\
&\leq \frac{1}{r^{2\alpha-1}}\bigg(\frac{1}{4}\frac{1}{r^2}+\frac{(N-1)^2}{4} \, C^2\, r^{2a}+\bigg( \alpha +\frac{N-1}{2}\bigg)\, C \,r^{a-1} +\alpha\frac{1}{r^2}\bigg)\frac{1}{\psi^{N-1}}\quad \text{as } r \rightarrow + \infty\text{ (using  \eqref{con2bis})}
 \\ \\
&\frac{(u_0  \phi_n)^2}{r^2}= \frac{1}{r^{2\alpha+1}}\frac{1}{\psi^{N-1}}
\\ \\
&\frac{(u_0 \phi_n)^2}{\psi^2}\leq \frac{1}{r^{2\alpha+1}}\frac{1}{\psi^{N-1}} \quad \text{(using \eqref{con2} and \eqref{sturm_comp}). } 
\end{align*}

As for the terms involving the curvatures, we first note that, if \eqref{con2} holds then $\Lambda_{\pi,r}^{rad} >0$ (see the proof of Corollary \ref{use_cor_1} below) and, by density arguments, \eqref{eq_mainthm-1} yields
$$\frac{(N-1)}{4} \int_{M}  \Lambda_{\pi,r}^{rad}  \,  u^2 \, {\rm d}v_{g}\leq \int_{M} \rgt \, {\rm d}v_{g}  \quad \text{for all } u\in H^1(M)\,.$$
Hence, since $u_0 \phi_n \in H^1(M)$, we deduce that
$$\frac{(N-1)}{4} \int_{M} \Lambda_{\pi,r}^{rad} \, (u_0 \phi_n)^2 \, {\rm d}v_{g} \leq \int_{M} \left(\frac{\partial (u_0 \phi_n) }{\partial r}\right)^2 \, {\rm d}v_{g} $$
where the boundedness of the latter term and, in turn, of the first, follows by \eqref{asymta}.
\par
Since for all terms listed above we have integrability at infinity whenever $\alpha>1+a$, by Lebesgue Theorem, we can pass to the limit in \eqref{useful_eqR} as $R \rightarrow + \infty$ and we conclude that

\begin{align}\label{useful_eq}
\int_{M} \bigg( \frac{\partial u_0}{\partial r} \bigg) \bigg( \frac{\partial (u_0\phi_n^2) }{\partial r}\bigg) \ {\rm d}v_{g}-\frac{(N-1)}{4}\int_{M}\Lambda_{\pi,r}^{rad} \, (u_0 \phi_n)^2 \ {\rm d}v_{g}\\ \notag
=\frac{1}{4}\int_{M}\frac{(u_0  \phi_n)^2}{r^2} \ {\rm d}v_{g}+\frac{(N-1)(N-3)}{4}\int_{M}\frac{(u_0 \phi_n)^2}{\psi^2} \ {\rm d}v_{g}\,.
\end{align}

\medskip

{\bf Step 3.} Next we observe that 

\begin{align*}
\int_{M} \bigg( \frac{\partial u_0}{\partial r} \bigg) \bigg( \frac{\partial (u_0\phi_n^2) }{\partial r}\bigg) \ {\rm d}v_{g}=\int_{M}\bigg( \frac{\partial (u_0\phi_n)}{\partial r}\bigg)^2 \ {\rm d}v_{g} - \int_{M}u_0^2 \bigg( \frac{\partial \phi_n}{\partial r}\bigg)^2 \ {\rm d}v_{g}\,.
\end{align*}
 
Note that, by the above discussion, the first two terms are well defined. Furthermore, for $r\geq n$, we have that
$$u_0^2 \bigg( \frac{\partial \phi_n}{\partial r}\bigg)^2\psi^{N-1}=\frac{\alpha^2}{r^{2\alpha+1}}$$ 
which is in fact integrable for $\alpha>0$. \par

By using this into \eqref{useful_eq} we have

\begin{align}
\int_{M}\bigg( \frac{\partial (u_0\phi_n)}{\partial r}\bigg)^2 \ {\rm d}v_{g} &- \frac{(N-1)}{4}\int_{M}\Lambda_{\pi,r}^{rad} \, (u_0\phi_n)^2 \ {\rm d}v_{g}& \nonumber\\&=\frac{1}{4}\int_{M}\frac{(u_0\phi_n)^2}{r^2} \ {\rm d}v_{g}+\frac{(N-1)(N-3)}{4}\int_{M}\frac{(u_0\phi_n)^2}{\psi^2} \ {\rm d}v_{g}+ \int_{M}u_0^2 \bigg( \frac{\partial \phi_n}{\partial r}\bigg)^2 \ {\rm d}v_{g} \nonumber
\end{align}

and, by considering the quotient in \eqref{define_C_M} with $u=u_0\phi_n$, we obtain

\begin{align*}
\frac{\int_{M}u_0^2 \big( \frac{\partial (u_0\phi_n)}{\partial r}\big)^2 \ {\rm d}v_{g} \ {\rm d}v_{g} - \frac{(N-1)}{4}\int_{M} \Lambda_{\pi,r}^{rad}  \, (u_0\phi_n)^2 \ {\rm d}v_{g}}{\int_{M}\frac{(u_0\phi_n)^2}{r^2} \ {\rm d}v_g}\nonumber \\=\frac{1}{4}+\frac{(N-1)(N-3)}{4}\frac{\int_{M}\frac{(u_0\phi_n)^2}{\psi^2} \ {\rm d}v_{g}}{\int_{M}\frac{(u_0\phi_n)^2}{r^2} \ {\rm d}v_g}+\frac{\int_{M}u_0^2 \big( \frac{\partial \phi_n}{\partial r}\big)^2 \ {\rm d}v_{g}}{\int_{M}\frac{(u_0\phi_n)^2}{r^2} \ {\rm d}v_g}\,.
\end{align*}

Then, from definition of $C_M$ and the above, we infer 

\begin{align}\label{quotient}
C_{M}\leq \frac{1}{4}+\frac{(N-1)(N-3)}{4}\frac{\int_{M}\frac{(u_0\phi_n)^2}{\psi^2} \ {\rm d}v_{g}}{\int_{M}\frac{(u_0\phi_n)^2}{r^2} \ {\rm d}v_g}+\frac{\int_{M}u_0^2 \big( \frac{\partial \phi_n}{\partial r}\big)^2 \ {\rm d}v_{g}}{\int_{M}\frac{(u_0\phi_n)^2}{r^2} \ {\rm d}v_g}\,.
\end{align}

\medskip

{\bf Step 4.} We estimate each term of the r.h.s. of \eqref{quotient}. Note that $\omega_N$ denotes the $N$ dimensional measure of unit sphere, hence a finite number.

First we estimate the denominator
\begin{align}\label{denom_est}
\int_{M}\frac{(u_0\phi_n)^2}{r^2} \ {\rm d}v_{g}&=\omega_N\int_{0}^{\infty}\frac{(u_0\phi_n)^2}{r^2}(\psi(r))^{(N-1)} \ {\rm d}r=\omega_N\int_{0}^{\infty}\frac{\phi_n^2}{r} \ {\rm d}r\nonumber\\&\geq\omega_N\int_{2}^{n}\frac{\phi_n^2}{r} \ {\rm d}r+\omega_N\int_{n}^{\infty}\frac{\phi_n^2}{r} \ {\rm d}r\nonumber\\&=\omega_N\:n^{-2\alpha}\bigg[\log(\frac{n}{2})+\frac{1}{2\alpha}\bigg]\,.
\end{align} 

Now we consider
\begin{align}\label{numa_3_est}
\int_{M}u_0^2 \left( \frac{\partial \phi_n}{\partial r}\right)^2 \ {\rm d}v_{g}&=\omega_N\int_{0}^{\infty}u_0^2(\phi_n')^2(\psi(r))^{(N-1)} \ {\rm d}r=\omega_N\int_{0}^{\infty}r(\phi_n')^2 \ {\rm d}r\nonumber\\&=\omega_N\int_{1}^{2}r(\phi_n')^2 \ {\rm d}r+\omega_N\int_{n}^{\infty}r(\phi_n')^2 \ {\rm d}r\nonumber\\&=\omega_N\:n^{-2\alpha}\bigg[\frac{3}{2}+\frac{\alpha}{2}\bigg]\,.
\end{align}

Finally, using integration by parts, $\alpha>1$, \eqref{sturm_comp}  and $\sinh r\geq r$, we obtain

\begin{align}\label{numa_2_est}
\int_{M}\frac{(u_0\phi_n)^2}{\psi^2} \ {\rm d}v_{g}&=\omega_N\int_{0}^{\infty}\frac{(u_0\phi_n)^2}{\psi^2(r)}(\psi(r))^{(N-1)} \ {\rm d}r=\omega_N\int_{0}^{\infty}\frac{r(\phi_n)^2}{\psi^2(r)} \ {\rm d}r\nonumber\\&=\omega_N\int_{1}^{2}\frac{r(\phi_n)^2}{\psi^2(r)} \ {\rm d}r+\omega_N\int_{2}^{n}\frac{r(\phi_n)^2}{\psi^2(r)} \ {\rm d}r+\omega_N\int_{n}^{\infty}\frac{r(\phi_n)^2}{\psi^2(r)} \ {\rm d}r\nonumber\\&\leq\omega_N\int_{1}^{2}\frac{r\:n^{-2\alpha}(r-1)^2}{\text{sinh}^2\:r} \ {\rm d}r+\omega_N\int_{2}^{n}\frac{r\:n^{-2\alpha}}{\text{sinh}^2\:r} \ {\rm d}r+\omega_N\int_{n}^{\infty}\frac{r^{1-2\alpha}}{\text{sinh}^2\:r} \ {\rm d}r\nonumber\\& \leq \frac{\omega_N\:n^{-2\alpha}}{\text{sinh}^2\:1}\int_{1}^{2}r(r-1)^2 \ {\rm d}r+\omega_N\:n^{-2\alpha}\int_{2}^{n}(\text{csch}\:r) \ {\rm d}r +\omega_N\int_{n}^{\infty}r^{-1-2\alpha} \ {\rm d}r\nonumber\\& = \omega_N\:n^{-2\alpha} \bigg[\frac{7}{12\:(\text{sinh}^2\:1)}+\log\bigg|\frac{\exp(-n)-1}{\exp(-n)+1}\bigg|-\log\bigg|\frac{\exp(-2)-1}{\exp(-2)+1}\bigg|+\frac{1}{\alpha}\bigg]\,.
\end{align}

\medskip

{\bf Step 5.} We are now in the final stage. Using \eqref{numa_2_est} and \eqref{denom_est} we have
\begin{align*}
\frac{\int_{M}\frac{(u_0\phi_n)^2}{\psi^2} \ {\rm d}v_{g}}{\int_{M}\frac{(u_0\phi_n)^2}{r^2} \ {\rm d}v_g}\leq\frac{\frac{7}{12\:(\text{sinh}^2\:1)}+\log\bigg|\frac{\exp(-n)-1}{\exp(-n)+1}\bigg|-\log\bigg|\frac{\exp(-2)-1}{\exp(-2)+1}\bigg|+\frac{1}{2\alpha}}{\log(\frac{n}{2})+\frac{1}{2\alpha}} \rightarrow 0\quad \text{for}\: n \rightarrow\infty
\end{align*}

and, using \eqref{numa_3_est} and \eqref{denom_est}, we deduce that

\begin{align*}
\frac{\int_{M}u_0^2 \big( \frac{\partial \phi_n}{\partial r}\big)^2 \ {\rm d}v_{g}}{\int_{M}\frac{(u_0\phi_n)^2}{r^2} \ {\rm d}v_g}\leq\frac{\frac{3}{2}+\frac{\alpha}{2}}{\log(\frac{n}{2})+\frac{1}{2\alpha}}\rightarrow 0\quad \text{for}\: n \rightarrow\infty. 
\end{align*}

Finally, combining these two and \eqref{quotient}, we can say that,

\begin{align*}
C_{M}\leq \frac{1}{4}+\frac{(N-1)(N-3)}{4}\frac{\int_{M}\frac{(u_0\phi_n)^2}{\psi^2} \ {\rm d}v_{g}}{\int_{M}\frac{(u_0\phi_n)^2}{r^2} \ {\rm d}v_g}+\frac{\int_{M}u_0^2 \big( \frac{\partial \phi_n}{\partial r}\big)^2 \ {\rm d}v_{g}}{\int_{M}\frac{(u_0\phi_n)^2}{r^2} \ {\rm d}v_g}=\frac{1}{4}+o(1)\,.
\end{align*}

Hence, $\{u_0\phi_n\}$  is the required minimizing sequence and, in turn, $C_{M}=\frac{1}{4}$.

\medskip

\subsection{Proof of Corollary \ref{use_cor_1}}

	Recalling \eqref{eq_mainthm-1}, we know that
	
	\begin{equation*}
	\int_{M} \rgt \ {\rm d}v_g -   \frac{N-1}{4} \int_{M} \Lambda_{\pi,r}^{rad}  \, u^2 \ {\rm d}v_g   \geq \frac{1}{4} \int_{M} \frac{u^2}{r^2} \ {\rm d}v_g +\frac{(N-1)(N-3)}{4}\int_{M} \frac{u^2}{\psi^2} \ {\rm d}v_g,
	\end{equation*}
	for all $u\in C_c^\infty(M)$. Therefore, to prove the statement, it is enough to show that 
	$  \Lambda_{\pi,r}^{rad} \geq (N-1)$ for all $r>0$. This follows by using \eqref{con2} and \eqref{sturm_comp}, by which we deduce that

		$$\Lambda_{\pi,r}^{rad}(r)= \left[ 2 \frac{\psi^{\prime \prime}(r)}{\psi(r)} + (N-3) \frac{((\psi^{{\prime}}(r))^2 - 1)}{\psi^2(r)} \right] \geq \left( 2 + (N-3) \right) = N-1$$
 for all $r>0$. $\qed$

\subsection{Proof of Corollary \ref{use_cor_2}}

	Let $M=\hn$, the statement of Corollary \ref{use_cor_2} comes from Theorem \ref{eq_mainthm-1} by taking $\psi(r)=\sinh(r)$, once proved the sharpness of the constants $ \left( \frac{N-1}{2} \right)^{2}$ and $\frac{(N-1)(N-3)}{4}$. This issue readily follows from the sharpness of the same constants in inequality \eqref{poin_hardy_sharp} otherwise, by Gauss's Lemma, we would get a contradiction. $\qed$

	\medskip
	
	\section{Proofs of Theorem~\ref{mainthm-2} and Corollary \ref{use_mainthm_2}} \label{4}
	\subsection{Proof of Theorem~\ref{mainthm-2}.} 
	For any $u\in C_c^\infty(M)$ we have 
	
	\begin{equation*}
	\Delta_{r,g}u = \frac{\partial^2 u}{\partial r^2}+(N-1)\frac{\psi^{\prime}}{\psi} \frac{\partial u}{\partial r}\,.
	\end{equation*}
	
	Exploiting the spherical harmonic decomposition, we write
		$$u(x):= u(r, \sigma) = \sum_{n = 0}^{\infty} a_{n}(r) P_{n}(\sigma)\,$$
	and we obtain
	\begin{align*}
	(\Delta_{r,g} u)^2  = \bigg(\sum_{n = 0}^{\infty} \left( a_{n}^{\prime \prime} + (N-1) \frac{\psi^{\prime}}{\psi} a_{n}^{\prime} \right) P_{n}\bigg)^2 \text{ and } \left(\rg \right)^2 =  \left(\sum_{n = 0}^{\infty} a_{n}^{\prime} P_{n}\right)^2.
	\end{align*}

	Recalling that $\{P_n\}$ is orthonormal and by using polar coordinates and integrating by parts, we get
	
	\begin{align*}
	&\int_{M} (\Delta_{r,g} u)^2 \ {\rm d}v_g - \frac{(N-1)}{4} \int_{M} \Lambda_{\pi,r}^{rad} \, \rgt \ {\rm d}v_g  \\
	& = \omega_N \sum_{n = 0}^{\infty} \int_{0}^{\infty}\bigg[(a_{n}^{\prime \prime})^2 - (N-1) (a_n^{\prime})^2\left(\frac{\psi^{\prime}}{\psi}\right)^{\prime} -\frac{(N-1)}{4}  \Lambda_{\pi,r}^{rad} \, (a_n^{\prime})^2\bigg] \psi^{N-1} \ {\rm d}r\,.
	\end{align*}

Now, exploiting Theorem \ref{mainthm-1} for each $a_n^{\prime}$, we get
	\begin{align*}
	&\int_{0}^{\infty}(a_{n}^{\prime \prime})^2\,\psi^{N-1} \ {\rm d}r \geq\frac{(N-1)}{4} \int_{0}^{\infty} \, \Lambda_{\pi,r}^{rad}  (a_{n}^{\prime})^2\,\psi^{N-1} \ {\rm d}r \\
	&+ \frac{1}{4} \int_{0}^{\infty}\frac{(a_{n}^{\prime})^2}{r^2}\,\psi^{N-1} \ {\rm d}r+\frac{(N-1)(N-3)}{4} \int_{0}^{\infty}\frac{(a_{n}^{\prime})^2}{\psi^2}\,\psi^{N-1} \ {\rm d}r
\end{align*}
and, in turn, we infer
	
	\begin{align*}
	& \int_{M} (\Delta_{r,g} u)^2 \ {\rm d}v_g  - \frac{(N-1)}{4} \int_{M} \Lambda_{\pi,r}^{rad} \, \rgt \ {\rm d}v_g  \geq \frac{1}{4} \int_{M} \frac{1}{r^2}\rgt \ {\rm d}v_g \nonumber \\ & +\frac{(N^2- 1)}{4}\int_{M} \frac{1}{\psi^2} \rgt \ {\rm d}v_g +(N-1)\int_M\:\big[K_{\pi,r}^{rad} - H_{\pi,r}^{tan}\big]   \rgt  \ {\rm d}v_g\,.
	\end{align*}
	
	By rearranging we obtain the desired result. $\qed$
	\medskip

\subsection{Proof of Corollary \ref{cor_mainthm-2}.}
The desired inequality follows at once by \eqref{eq_mainthm-2} simply noting that, when \eqref{con2} and \eqref{con3} holds, then
$$ \Lambda_{\pi,r}^{rad}+4(K_{\pi,r}^{rad} - H_{\pi,r}^{tan}) \geq N-1\quad \text{for all } r>0\,.$$
See also the proof of Corollary \ref{use_cor_1}. $\qed$

\subsection{Proof of Corollary \ref{use_mainthm_2}.}

		When  $M = \hn$ all assumptions of Theorem \ref{mainthm-2} are satisfied and since $K_{\pi,r}^{rad} =H_{\pi,r}^{tan}=-1$, the desired inequality follows at once. Concerning the sharpness of the constant  $ \left( \frac{N-1}{2} \right)^{2} $, it follows by combining \eqref{higher_order_poin} with \eqref{base} and Gauss's lemma. $\qed$
		\par
		
		\medskip
\par

For future purposes, we conclude the section by stating a further inequality that follows directly by combining Theorem \ref{mainthm-2} and Corollary \ref{use_cor_1}:
  
  \begin{corollary}\label{use_cor_3}
  	Let $ N \geq 5$ and $M$ as given in \eqref{meetric} satisfying  \eqref{con2} and \eqref{con3}.
  	Then, for all $ u \in C_{c}^{\infty}(M)$ there holds
  	
  	\begin{align}\label{eq_use_cor_3}
  	\int_{M} (\Delta_{r,g} u)^2 \ {\rm d}v_g & - \,
  	\left( \frac{N-1}{2} \right)^{4}  \int_{M} u^2  \, {\rm d}v_{g}
  	\geq \frac{(N-1)^2}{16} \int_{M} \frac{u^2}{r^2} \, {\rm d}v_{g} + \frac{(N-1)^3(N-3)}{16} \int_{M} \frac{u^2}{\psi^2} \, {\rm d}v_{g} \nonumber \\ & + \frac{1}{4} \int_{M} \frac{1}{r^2}\rgt \ {\rm d}v_g +\frac{(N^2- 1)}{4}\int_{M} \frac{1}{\psi^2} \rgt \ {\rm d}v_g\,.
  	\end{align}
  \end{corollary}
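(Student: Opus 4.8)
The plan is to read \eqref{eq_use_cor_3} as nothing more than a suitable linear combination of two inequalities already established under exactly the hypotheses assumed here. First I would invoke Corollary \ref{cor_mainthm-2}: since \eqref{con2} and \eqref{con3} hold, inequality \eqref{further} is available for every $u\in C_c^\infty(M)$, which I rewrite as a lower bound
\begin{equation*}
\int_{M} \rlt \ {\rm d}v_g \;\geq\; \left( \frac{N-1}{2} \right)^{2} \int_{M} \rgt \ {\rm d}v_g + \frac{1}{4} \int_{M} \frac{1}{r^2}\rgt \ {\rm d}v_g +\frac{(N^2- 1)}{4}\int_{M} \frac{1}{\psi^2} \rgt \ {\rm d}v_g .
\end{equation*}
Next I would invoke Corollary \ref{use_cor_1}: under \eqref{con2}, inequality \eqref{eqn_use_cor_1} bounds $\int_M \rgt\,{\rm d}v_g$ from below by $\left(\frac{N-1}{2}\right)^2\int_M u^2\,{\rm d}v_g+\frac14\int_M \frac{u^2}{r^2}\,{\rm d}v_g+\frac{(N-1)(N-3)}{4}\int_M \frac{u^2}{\psi^2}\,{\rm d}v_g$.

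The final step is purely algebraic: multiply the bound coming from \eqref{eqn_use_cor_1} by the positive constant $\left(\frac{N-1}{2}\right)^2$ and substitute it into the term $\left(\frac{N-1}{2}\right)^2\int_M \rgt\,{\rm d}v_g$ on the right-hand side of the displayed inequality above. The radial-gradient term $\int_M\rgt\,{\rm d}v_g$ then disappears from the final bound, a $\left(\frac{N-1}{2}\right)^4\int_M u^2\,{\rm d}v_g$ term appears, and using $\left(\frac{N-1}{2}\right)^2\cdot\frac14=\frac{(N-1)^2}{16}$ together with $\left(\frac{N-1}{2}\right)^2\cdot\frac{(N-1)(N-3)}{4}=\frac{(N-1)^3(N-3)}{16}$ one recovers exactly \eqref{eq_use_cor_3}.

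There is no genuine difficulty in this argument; the only point to be careful about is that both ingredient inequalities must be applied on the same function space $C_c^\infty(M)$. This is fine: when \eqref{con2} holds, $M$ is a Cartan--Hadamard manifold with strictly negative curvature, so the capacity argument recalled in Remark \ref{density} extends \eqref{eq_mainthm-1}, hence \eqref{eqn_use_cor_1}, from $C_c^\infty(M\setminus\{o\})$ to all of $C_c^\infty(M)$, and the same density step applies to the second-order inequality \eqref{eq_mainthm-2}. I would also remark in passing that the stated range $N\geq 5$ is not forced by this derivation ($N\geq 4$ already makes every term on the right-hand side nonnegative); it is recorded because this is the regime in which Corollary \ref{use_cor_3} will be used, namely in the proof of Theorem \ref{mainthm-3}.
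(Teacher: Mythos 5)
Your proposal is correct and is exactly the argument the paper intends: Corollary \ref{use_cor_3} is stated as following ``directly by combining Theorem \ref{mainthm-2} and Corollary \ref{use_cor_1}'', i.e. substituting the lower bound \eqref{eqn_use_cor_1} for the term $\left(\frac{N-1}{2}\right)^{2}\int_M\rgt\,{\rm d}v_g$ appearing in \eqref{further}, which is precisely your computation. Your side remark that the restriction $N\geq 5$ is not forced by this derivation (it is inherited from the later application in Theorem \ref{mainthm-3}) is also accurate.
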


	\section{Proof of Theorem~\ref{mainthm-3}}\label{5}
	
	A key ingredient in the proof of Theorem~\ref{mainthm-3} will be Lemma \ref{rad_lap} below which is proved buying the lines of  \cite[Theorem 5.2]{VHN} where the same inequality is given in $\hn$. Here we extend its statement to more general manifolds using Sturm Comparison Principle, i.e. our Lemma \ref{sturm}.
	
	\begin{lemma}\label{rad_lap}
	Let $\,(M,g)$ be an $N$-dimensional Riemannian model with metric $g$ as given in \eqref{meetric} with $\psi$ satisfying \eqref{psi} and \eqref{con2}. Then, for $0\leq \beta < N-4$, there holds 
		\begin{equation}\label{eq_rad_lap}
		\int_{M}(\Delta_{g} u)^2r^{-\beta}\: {\rm d}v_{g} \geq \int_{M}(\Delta_{r,g} u)^2r^{-\beta}\: {\rm d}v_{g}\quad   \text{   for all  }u\in C_c^\infty(M).
		\end{equation}
		Moreover, the equality holds when $u$ is a radial function.
		
	\end{lemma}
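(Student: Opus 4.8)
The plan is to prove Lemma \ref{rad_lap} by exploiting the spherical harmonics decomposition of $u$ together with the explicit form of the Laplacian \eqref{laplacian} on a model manifold, reducing the inequality to a family of one-dimensional estimates, one for each spherical mode, and then controlling the cross-terms that arise using the Sturm comparison bound $\psi'/\psi \ge \coth r$ from Lemma \ref{sturm}. First I would write $u(r,\sigma)=\sum_{n\ge 0} a_n(r)P_n(\sigma)$, so that, by \eqref{laplacian} and the eigenvalue relation $-\Delta_{\mathbb{S}^{N-1}}P_n=\lambda_n P_n$ with $\lambda_n=n(n+N-2)$, one has
\begin{equation*}
\Delta_g u = \sum_{n\ge 0}\left(\Delta_{r,g}a_n - \frac{\lambda_n}{\psi^2}a_n\right)P_n,\qquad \Delta_{r,g}u = \sum_{n\ge 0}\big(\Delta_{r,g}a_n\big)P_n.
\end{equation*}
Using orthonormality of $\{P_n\}$ and polar coordinates, $\int_M(\Delta_g u)^2 r^{-\beta}\,{\rm d}v_g - \int_M(\Delta_{r,g}u)^2 r^{-\beta}\,{\rm d}v_g$ splits as a sum over $n\ge 1$ (the $n=0$ term vanishes, which is exactly why equality holds for radial $u$) of
\begin{equation*}
\omega_N\sum_{n\ge 1}\int_0^\infty\left[-2\lambda_n\,\frac{a_n\,\Delta_{r,g}a_n}{\psi^2} + \lambda_n^2\,\frac{a_n^2}{\psi^4}\right]r^{-\beta}\psi^{N-1}\,{\rm d}r,
\end{equation*}
so the task is to show each such integral is nonnegative.

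The core one-dimensional claim is therefore: for $f:=a_n\in C_c^\infty$ and $\lambda:=\lambda_n>0$,
\begin{equation*}
-2\lambda\int_0^\infty \frac{f\,\Delta_{r,g}f}{\psi^2}\,r^{-\beta}\psi^{N-1}\,{\rm d}r + \lambda^2\int_0^\infty \frac{f^2}{\psi^4}\,r^{-\beta}\psi^{N-1}\,{\rm d}r \ge 0.
\end{equation*}
Since $\lambda^2\ge 0$ trivially helps, the point is to integrate the first term by parts. Writing $\Delta_{r,g}f = \psi^{-(N-1)}(\psi^{N-1}f')'$, one gets
\begin{equation*}
-2\lambda\int_0^\infty \frac{f\,(\psi^{N-1}f')'}{\psi^2}r^{-\beta}\,{\rm d}r = 2\lambda\int_0^\infty (f')^2\,\psi^{N-3}r^{-\beta}\,{\rm d}r + 2\lambda\int_0^\infty f f' \,\psi^{N-3}\big(r^{-\beta}\big)'\,\cdots
\end{equation*}
more precisely, after moving the derivative onto $f\,\psi^{2-N+N-1}r^{-\beta}\psi^{-2+N-1}$ — the cleanest route is to differentiate $\psi^{N-3}r^{-\beta}$ and isolate the ``good'' term $2\lambda\int (f')^2\psi^{N-3}r^{-\beta}$, which is manifestly nonnegative, against a remainder of the form $\lambda\int f^2\,\big(\psi^{N-3}r^{-\beta}\big)''$-type expression (obtained by a further integration by parts on the $ff'$ piece). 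Then one combines this remainder with the nonnegative $\lambda^2$-term and the surviving $(f')^2$-term, and the inequality follows provided the coefficient of $f^2$ that remains is controlled — this is where $0\le\beta<N-4$ and the curvature assumption \eqref{con2} enter, via $\psi'/\psi\ge\coth r\ge 1/r$ and $\psi\ge\sinh r$, to guarantee the relevant second-derivative expression in $\psi$ and $r^{-\beta}$ has the right sign.

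The main obstacle I anticipate is precisely the bookkeeping of this last step: after the two integrations by parts one is left with an expression of the form $\int f^2\,Q(r)\,{\rm d}r$ where $Q$ involves $\psi''/\psi$, $(\psi'/\psi)^2$, $\beta$, $\beta^2$ and $\beta(N-\cdots)/r^2$, and one must show $Q\ge$ (something dominated by the $\lambda^2/\psi^4$ term, or outright $Q\ge 0$). Since $\psi''/\psi = -K_{\pi,r}^{rad}\ge 1$ and $(\psi')^2-1\ge 0$ under \eqref{con2}, the $\psi$-dependent part of $Q$ should be handled by these inequalities, while the $r$-dependent part forces the restriction $\beta<N-4$ (so that a coefficient like $(N-4-\beta)$ stays positive). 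Following the structure of \cite[Theorem 5.2]{VHN}, the $\hn$ case ($\psi=\sinh r$) is the model computation, and the only genuinely new input here is replacing each identity valid for $\sinh r$ by the corresponding Sturm inequality; one must check that every place where $\psi=\sinh r$ was used in \cite{VHN} the needed inequality goes in the favorable direction, which it does because all the comparison bounds in Lemma \ref{sturm} point the same way. The equality statement for radial $u$ is immediate since then $a_n\equiv 0$ for $n\ge 1$ and the difference of the two sides is identically zero.
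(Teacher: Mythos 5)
Your proposal follows essentially the same route as the paper: spherical harmonics decomposition reduces the claim to a one-dimensional inequality for each mode $n\ge 1$, which is then handled by integration by parts and the Sturm comparison bounds $\psi''/\psi\ge 1$ and $\psi'/\psi\ge\coth r\ge 1/r$. The remainder coefficient you call $Q$ works out to $\psi^{N-5}r^{-\beta}\bigl[\,2\psi''\psi+2(N-4)(\psi')^2+\beta(N-5)\psi'\psi/r-\beta(\beta+1)\psi^2/r^2\,\bigr]$, whose nonnegativity reduces, exactly as you anticipate, to $2(N-4)+\beta(N-5)-\beta(\beta+1)\ge 0$ for $0\le\beta<N-4$; the paper arrives at the same final polynomial condition, after additionally routing the $|\nabla_g a_n|^2$ term through a weighted Hardy inequality whose extra positive contribution is not actually needed for the sign check.
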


	\begin{proof}
		
		Consider any $u\in C_c^\infty(M)$, then
		
		\begin{align*}
		\Delta_{r,g}u = \frac{\partial^2 u}{\partial r^2}+(N-1)\frac{\psi^{\prime}}{\psi} \frac{\partial u}{\partial r}\quad \text{ and } \quad
		\Delta_{g} u = \frac{\partial^2 u}{\partial r^2}+(N-1)\frac{\psi^{\prime}}{\psi}\frac{\partial u}{\partial r}+\frac{1}{\psi^2}\Delta_{S^{N-1}}\,.
		\end{align*}
		
		Now, by decomposing into spherical harmonics the above expressions, see the proof of Theorem \ref{mainthm-2} for more details, we write 
			$$u(x):= u(r, \sigma) = \sum_{n = 0}^{\infty} a_{n}(r) P_{n}(\sigma)\,.$$
		
		From this decomposition of $u$ we have
		\begin{equation*}
		\Delta_{r,g} u=\sum_{n=0}^{\infty}\Delta_{g} a_n(r)P_n(\sigma)
		\end{equation*}
		and 
		\begin{equation*}
		\Delta_{g} u=\sum_{n=0}^{\infty}\bigg(\Delta_{g} a_n(r)-\lambda_n \frac{a_n(r)}{\psi(r)^2}\bigg)P_n(\sigma)\,.
		\end{equation*}
		
		Hence, to prove \eqref{eq_rad_lap} it is enough to show that 
		\begin{align*}
		\lambda_n\int_M\frac{a_n^2}{r^{\beta}\psi^4}\: {\rm d}v_g - 2 \int_M
		\frac{a_n(\Delta_g a_n)}{r^{\beta}\psi^2}\: {\rm d}v_g\geq 0\quad \text{  for all } n\geq 1.
		\end{align*}
		
		Since we have that $2a_n(\Delta_ga_n)=\Delta_g(a_n^2)-2|\nabla_ga_n|^2$, using by parts formula, the above inequality is equivalent to
		
		\begin{align}\label{sub_step}
		\lambda_n\int_M\frac{a_n^2}{r^{\beta}\psi^4}\: {\rm d}v_g - \int_M
		a_n^2\Delta_g \bigg(\frac{1}{r^{\beta}\psi^2}\bigg)\: {\rm d}v_g+2\int_M
		\frac{|\nabla_g a_n|^2}{r^{\beta}\psi^2}\: {\rm d}v_g\geq 0\text{  for all } n\geq 1.
		\end{align}
		
		Set $\kappa(r):=\frac{1}{r^{\beta}\psi^2}$,  by exploiting \eqref{con2}, we have 
		\begin{align}\label{sub_step_key_1}
		-\Delta_g\kappa(r)&=\kappa(r)\bigg(2 \frac{\psi''}{\psi} +2(N-4)\frac{{\psi^{\prime}}^2}{\psi^2}-\frac{\beta(\beta+1)}{r^2}+\beta(N-5)\frac{\psi^{\prime
		}}{r\psi}\bigg)\\  \notag
		& \geq  \kappa(r)\bigg(2 +2(N-4)\frac{{\psi^{\prime}}^2}{\psi^2}-\frac{\beta(\beta+1)}{r^2}+\beta(N-5)\frac{\psi^{\prime}}{r\psi}\bigg)\,.
		\end{align}
		
		We claim that for $f\in C_c^\infty (M)$ radial there holds
		\begin{align}\label{sub_step_key_2}
		\int_M\frac{|\nabla_g f|^2}{r^{\beta}\psi^2}\: {\rm d}v_g\geq \frac{(N-\beta-4)^2}{4}\int_M\frac{f^2}{r^{\beta}\psi^4}\: {\rm d}v_g
		\end{align}
		
		which, in polar coordinate, is equivalent to
		
		\begin{align}\label{main_step}
		\int_{0}^{\infty}{f^{\prime}}^2r^{-\beta}\psi^{N-3}\: {\rm d}r\geq \frac{(N-\beta-4)^2}{4}\int_{0}^{\infty}{f}^2r^{-\beta}\psi^{N-5}\:  {\rm d}r\,.
		\end{align}
		
		Integrating by parts we have the following
		
		\begin{align}\label{use_eqn}
		&-\int_{0}^{\infty}{ff^{\prime}}r^{N-\beta-4}\big(\frac{\psi}{r}\big)^{N-5}\: {\rm d}r=\\ \notag
		&\frac{(N-\beta-4)}{2}\int_{0}^{\infty}{f}^2r^{-\beta}\psi^{N-5}\: {\rm d}r+\frac{(N-5)}{2}\int_{0}^{\infty}{f}^2r^{-\beta}\psi^{N-4} 
		 \left(\frac{\psi' r-\psi}{\psi^2}\right) \: {\rm d}r\,.
		\end{align}
		
		On the other hand, using Young's inequality and Lemma \ref{sturm} by which $\frac{r}{\psi} \leq 1$ for all $r>0$, we have 
		\begin{align*}
		-\int_{0}^{\infty}   f f'  \,r^{N-\beta-4}\big(\frac{\psi}{r}\big)^{N-5}\: {\rm d}r =-\int_{0}^{\infty} (f r^{-\beta/2}\psi^{(N-5)/2})(f' r^{-\beta/2}\psi^{(N-3)/2} \frac{r}{\psi})\: {\rm d}r \\ 
		\leq \frac{(N-\beta-4)}{4}\int_{0}^{\infty}{f}^2r^{-\beta}\psi^{N-5}\:  {\rm d}r+\frac{1}{(N-\beta-4)}\int_{0}^{\infty}{f^{\prime}}^2r^{-\beta}\psi^{N-3}\: {\rm d}r\,
		\end{align*}
		
		and, by combining the above inequality with \eqref{use_eqn} and recalling that by Lemma \ref{sturm} there holds $
		 \left(\frac{\psi' r-\psi}{\psi^2}\right) \geq 0$ for all $r>0$, we obtain \eqref{main_step}.
		
		With the help of \eqref{sub_step_key_1} and \eqref{sub_step_key_2} into \eqref{sub_step}, it is enough to prove
		
		\begin{align*}
		\int_M \frac{a_n^2}{r^\beta\psi^4}\bigg[\lambda_n+\frac{(N-\beta-4)^2}{2} +2\psi^2+2(N-4){\psi^{\prime}}^2-\beta(\beta+1)\frac{\psi^2}{r^2}+\beta(N-5)\frac{\psi^{\prime
			}\psi}{r}\bigg]{\rm d}v_g\geq 0
		\end{align*}
		for all $n\geq 1$, which, in our assumptions, follows by showing that 
		
		\begin{align*}
		\lambda_n+\frac{(N-\beta-4)^2}{2} +2\psi^2+2(N-4){\psi^{\prime}}^2-\beta(\beta+1)\frac{\psi^2}{r^2}+\beta(N-5)\frac{\psi^{\prime
			}\psi}{r}\geq 0\,
		\end{align*}
		for all $r>0$. Indeed, by \eqref{sturm_comp} and $(\coth r)\geq \frac{1}{r}$, we can estimate the above term by below as follows 
		
		\begin{align*}
		\lambda_n+\frac{(N-\beta-4)^2}{2} +2\psi^2+\bigg[2(N-4)-\beta(\beta+1)+\beta(N-5)\bigg]\frac{\psi^2}{r^2}\,.
		\end{align*}
		
		Finally, the conclusion comes by noticing that
		\begin{align*}
		2(N-4)-\beta(\beta+1)+\beta(N-5)\geq 0
		\end{align*}
		for all $0\leq \beta < N-4$.
		
		\end{proof}

{\bf Proof of Theorem~\ref{mainthm-3}.} 

	Let $u\in C_c^\infty(M)$, by combining Corollary \ref{use_cor_3} and Lemma \ref{rad_lap}, we obtain
	
		\begin{align}\label{cor_step_1}
  		\int_{M} (\Delta_{g} u)^2 \ {\rm d}v_g & - \,
  	\left( \frac{N-1}{2} \right)^{4}  \int_{M} u^2  \, {\rm d}v_{g}
  	\geq \frac{(N-1)^2}{16} \int_{M} \frac{u^2}{r^2} \, {\rm d}v_{g} + \frac{(N-1)^3(N-3)}{16} \int_{M} \frac{u^2}{\psi^2} \, {\rm d}v_{g} \nonumber \\ & + \frac{1}{4} \int_{M} \frac{1}{r^2}\rgt \ {\rm d}v_g +\frac{(N^2- 1)}{4}\int_{M} \frac{1}{\psi^2} \rgt \ {\rm d}v_g\,.
  	\end{align}

	Now, from \cite[Theorem 3.1]{VHN}, we know that for all $u\in C_c^\infty(M)$  with $N\geq 5$  there holds
	
	\begin{equation*}
	\int_{M} \frac{1}{r^2}\rgt \, {\rm d}v_{g}  \,
	\geq \frac{(N-4)^2}{4}  \int_{M} \frac{u^2}{ r^4} \, {\rm d}v_{g} \, 
	\end{equation*}
	
	which, substituted into \eqref{cor_step_1}, gives
	
	\begin{align*}
	\int_{M} (\Delta_{g} u)^2 \ {\rm d}v_g  - \,
	\left( \frac{N-1}{2} \right)^{4}  \int_{M} u^2  \, {\rm d}v_{g}
	& \geq \frac{(N-4)^2}{16}  \int_{\hn} \frac{u^2}{ r^4} \, {\rm d}v_{g}  +  \frac{(N-1)^2}{16} \int_{M} \frac{u^2}{r^2} \, {\rm d}v_{g} \nonumber \\ & +\frac{(N-1)^3(N-3)}{16} \int_{M} \frac{u^2}{\psi^2} \, {\rm d}v_{g} + \frac{(N^2- 1)}{4}\int_{M} \frac{1}{\psi^2} \rgt \ {\rm d}v_g
	\end{align*}
	
	and the non-negativity of last two terms immediately gives \eqref{eq_mainthm-3}. This concludes the proof. $\qed$

	\medskip

	\section{Proof of Theorem~\ref{mainthm-4}} \label{6}

	Let $u \in C_{c}^{\infty}(M)$, by spherical harmonics decomposition we have
		$$u(x):= u(r, \sigma) = \sum_{n = 0}^{\infty} a_{n}(r) P_{n}(\sigma)\,,$$
		see again the proof of Theorem \ref{mainthm-2} for more details. Then, 
		$$|\nabla_{g}u|^2 =  \sum_{n = 0}^{\infty} (a_{n}^{\prime})^2 P_{n}^2  + \frac{a_{n}^2}{\psi^2}  |\nabla_{\mathbb{S}^{N-1}}P_{n}|^2$$

	and 
	\begin{align*}
		(\Delta_{g} u)^2 & = \sum_{n = 0}^{\infty} \left( a_{n}^{\prime \prime} + (N-1) \frac{\psi^{\prime}}{\psi} a_{n}^{\prime} \right)^2 P_{n}^2 + 
		\sum_{n = 0}^{\infty} \frac{a_{n}^2}{\psi^4} (\Delta_{\mathbb{S}^{N-1}} P_{n})^2 \\
		& + 2 \sum_{n = 0}^{\infty} \left( a_{n}^{\prime \prime} + (N-1) \frac{\psi^{\prime}}{\psi} a_{n}^{\prime} \right) \frac{a_{n}}{\psi^2} (\Delta_{\mathbb{S}^{N-1}} P_{n}) P_{n}.
	\end{align*}
	
	Let us compute the r.h.s. of \eqref{eq_mainthm-4} in terms of $a_{n}$ and $P_{n}.$ Using the fact that $\int_{\mathbb{S}^{N-1}} P_{n}P_{m} \:d\sigma = \delta_{nm}$, we obtain
	
	\begin{align}\label{Th-2.4_rhs}
		\frac{1}{4} \int_{M} \frac{|\nabla_{g} u|^2}{r^2} \ {\rm d}v_g  & + \frac{(N^2- 1)}{4}\int_{M} \frac{|\nabla_{g}u|^2}{\psi^2} \ {\rm d}v_g = \frac{\omega_N}{4} \sum_{n = 0}^{\infty}  \int_{0}^{\infty} \frac{(a_{n}^{\prime})^{2}}{r^2} \psi^{N-1} \ {\rm d} r \notag \\
		& + \frac{\omega_N}{4} \sum_{n = 0}^{\infty} \lambda_{n} \int_{0}^{\infty} \frac{a_{n}^2}{r^2 \psi^2} \psi^{N-1} \ {\rm d} r + \omega_N\frac{(N^2 - 1)}{4} \sum_{n = 0}^{\infty}  \int_{0}^{\infty} \frac{(a_{n}^{\prime})^{2}}{\psi^2} \psi^{N-1} \ {\rm d} r \notag \\
		& +\omega_N\frac{(N^2 - 1)}{4} \sum_{n = 0}^{\infty} \lambda_{n}  \int_{0}^{\infty} \frac{a_{n}^2}{ \psi^4} \psi^{N-1} \ {\rm d} r.
	\end{align}
	
	Next we consider the l.h.s.of \eqref{eq_mainthm-4}, we have 
	
	\begin{align}\label{Th-2.4_lhs}
		&\int_{M} (\Delta_{g} u)^2 \ {\rm d}v_g  - \left( \frac{N-1}{2} \right)^{2} \int_{M} |\nabla_{g} u|^2 \ {\rm d}v_g \notag \\
		&= \omega_N \sum_{n = 0}^{\infty} \int_{0}^{\infty} \left( \left( a_{n}^{\prime \prime} + (N-1) \frac{\psi^{\prime}}{\psi} a_{n}^{\prime} \right)^2 \right) \psi^{N-1} \ {\rm d}r\nonumber \\
		& +\omega_N \int_{\mathbb{S}^{N-1}} \int_{0}^{\infty} \left( \sum_{n = 0}^{\infty} \frac{a_{n}^2}{\psi^4} (\Delta_{\mathbb{S}^{N-1}} P_{n})^2 \right) \psi^{N-1} \ {\rm d}r{\rm d}\sigma\nonumber \\
		& + 2 \omega_N \int_{\mathbb{S}^{N-1}} \int_{0}^{\infty} \left(\sum_{n = 0}^{\infty} \left( a_{n}^{\prime \prime} + (N-1) \frac{\psi^{\prime}}{\psi} a_{n}^{\prime} \right) \frac{a_{n}}{\psi^2} (\Delta_{\mathbb{S}^{N-1}} P_{n}) P_{n}\right) \psi^{N-1} \ {\rm d}r {\rm d}\sigma \nonumber\\
		& - \omega_N \left( \frac{N-1}{2} \right)^{2} \int_{0}^{\infty} \left(\sum_{n = 0}^{\infty} (a_{n}^{\prime})^2\right) \psi^{N-1} \ {\rm d}r  - 
		\omega_N \left( \frac{N-1}{2} \right)^{2} \int_{0}^{\infty} \left(\sum_{n= 0}^{\infty} \lambda_{n} \frac{a_{n}^2}{\psi^2}\right) \psi^{N-1} \ {\rm d}r.
	\end{align}
	
	We consider each term of the r.h.s. of \eqref{Th-2.4_lhs} separately. 
	
	First we use \eqref{eq_mainthm-2} for each $a_n(r)$ and we get
	\begin{align}\label{Th-2.4_sub_1}
		& \omega_{N} \sum_{n = 0}^{\infty} \int_{0}^{\infty} \left( a_{n}^{\prime \prime} + (N-1) \frac{\psi^{\prime}}{\psi} a_{n}^{\prime} \right)^2 \psi^{N-1} \ {\rm d}r \geq 
		\omega_N \left( \frac{N-1}{2} \right)^{2}  \sum_{n = 0}^{\infty}  \int_{0}^{\infty} (a_{n}^{\prime})^2 \psi^{N-1} \ {\rm d}r \notag  \\
		& + \frac{\omega_{N}}{4}  \sum_{n = 0}^{\infty}  \int_{0}^{\infty} \frac{(a_{n}^{\prime})^2}{r^2}  \psi^{N-1} \ {\rm d}r \,
		+ \omega_{N}\frac{(N^2 - 1)}{4}  \sum_{n = 0}^{\infty}  \int_{0}^{\infty} \frac{(a_{n}^{\prime})^2}{\psi^2}  \psi^{N-1} \ {\rm d}r\,.
	\end{align}
	
	Then, we exploit the equation $-\Delta_{\mathbb{S}^{N-1}}P_n=\lambda_n P_n$, the orthonormal properties of the $\{P_n\}$ and by parts formula to obtain
	
	\begin{align}\label{Th-2.4_sub_2}
		\int_{\mathbb{S}^{N-1}} \int_{0}^{\infty}  \sum_{n = 0}^{\infty} \frac{a_{n}^2}{\psi^4} (\Delta_{\mathbb{S}^{N-1}} P_{n})^2 \psi^{N-1} \ {\rm d}r {\rm d}\sigma =
		\omega_{N} \sum_{n= 0}^{\infty} \lambda_{n}^2 \int_{0}^{\infty} \frac{a_{n}^2}{\psi^4} \psi^{N-1} \ {\rm d}r
	\end{align}
	and
	\begin{align}\label{Th-2.4_sub_3}
		&  2 \int_{\mathbb{S}^{N-1}} \int_{0}^{\infty} \sum_{n = 0}^{\infty} \left( a_{n}^{\prime \prime} + (N-1) \frac{\psi^{\prime}}{\psi} a_{n}^{\prime} \right) \frac{a_{n}}{\psi^2} (\Delta_{\mathbb{S}^{N-1}} P_{n}) P_{n} \psi^{N-1} \ {\rm d}r {\rm d}\sigma \notag \\
		&= - 2\; \omega_{N} \sum_{n = 0}^{\infty} \lambda_{n}  \int_{0}^{\infty}  \left( a_{n}^{\prime \prime} + 
		(N-1) \frac{\psi^{\prime}}{\psi} a_{n}^{\prime} \right) \frac{a_{n}}{\psi^2} \psi^{N-1} \ {\rm d}r \notag \\
		&= -2\:\omega_N\sum_{n=0}^{\infty}\lambda_n\int_{0}^{\infty}a_n^{\prime\prime}\:a_n\psi^{N-3} \ {\rm d}r \:-\omega_N(N-1)\sum_{n=0}^{\infty}\lambda_n\int_{0}^{\infty}(a_n^2)^\prime\:\frac{\psi^\prime}{\psi}\psi^{N-3} \ {\rm d}r\notag \\
		& = 2\:\omega_{N} \sum_{n = 0}^{\infty} \lambda_{n} \int_{0}^{\infty} (a_{n}^{\prime})^2 \psi^{N-3} \ {\rm d}r \; -2\: \omega_N\sum_{n=0}^{\infty}\lambda_n\int_{0}^{\infty}(a_n^2)^\prime\:\frac{\psi^\prime}{\psi}\psi^{N-3} \ {\rm d}r \notag  \\
		& = 2\:\omega_{N} \sum_{n = 0}^{\infty} \lambda_{n} \int_{0}^{\infty} (a_{n}^{\prime})^2 \psi^{N-3} \ {\rm d}r \; + 2\:  \omega_{N} \sum_{n= 0}^{\infty} \lambda_{n} \int_{0}^{\infty} a_{n}^2 \left( \frac{\psi^{\prime \prime}}{\psi} - \frac{(\psi^{\prime})^2}{\psi^2} \right) \psi^{N-3} \ {\rm d}r\notag  \\
		& + 2 (N-3)\: \omega_{N} \sum_{n= 0}^{\infty} \lambda_{n} \int_{0}^{\infty} a_{n}^2 \left( \frac{\psi^{\prime}}{\psi} \right)^2 \psi^{N-3} \ {\rm d}r \notag \\
		& =  2\:\omega_{N} \sum_{n = 0}^{\infty} \lambda_{n} \int_{0}^{\infty} (a_{n}^{\prime})^2 \psi^{N-3} \ {\rm d}r + 
		2\:\omega_{N} \sum_{n= 0}^{\infty} \lambda_{n} \int_{0}^{\infty} a_{n}^2  \frac{\psi^{\prime \prime}}{\psi} \psi^{N-3} \ {\rm d}r \notag  \\
		& + 2 (N-4)\: \omega_{N} \sum_{n= 0}^{\infty} \lambda_{n} \int_{0}^{\infty} a_{n}^2 \left( \frac{\psi^{\prime}}{\psi} \right)^2 \psi^{N-3} \ {\rm d}r\,.
	\end{align}
	
	Now we estimate the first term of \eqref{Th-2.4_sub_3} by using \eqref{eqn_use_cor_1} for $``N-2"$ dimension. Notice that, to this aim, we need $N-2 \geq 3$, i.e. $N\geq 5$. For the remaining two terms we use \eqref{sturm_comp}, \eqref{con2}, $(\coth r)^2=1+\frac{1}{(\sinh r)^2}\geq 1+\frac{1}{\psi^2}$ and $\frac{1}{r^2}\geq\frac{1}{\psi^2}$ for $r>0$,  to obtain
	
	\begin{align}\label{Th-2.4_sub_4}
		&  2 \int_{\mathbb{S}^{N-1}} \int_{0}^{\infty} \sum_{n = 0}^{\infty} \left( a_{n}^{\prime \prime} + (N-1) \frac{\psi^{\prime}}{\psi} a_{n}^{\prime} \right) \frac{a_{n}}{\psi^2} (\Delta_{\mathbb{S}^{N-1}} P_{n}) P_{n} \psi^{N-1} \ {\rm d}r {\rm d}\sigma \notag \\
		& \geq 2\:\omega_N\frac{(N-3)^2}{4} \sum_{n= 0}^{\infty} \lambda_{n} \int_{0}^{\infty} a_{n}^2 \psi^{N-3} \ {\rm d}r 
		+ 2\:\omega_N\frac{1}{4} \sum_{n = 0}^{\infty} \lambda_{n} \int_{0}^{\infty} \frac{a_{n}^2}{r^2} \psi^{N-3} \ {\rm d}r  \notag \\
		& + 2\:\omega_N  \frac{(N-3)(N-5) }{4} \sum_{n = 0}^{\infty} \lambda_{n} \int_{0}^{\infty} \frac{a_{n}^2}{\psi^2} \psi^{N-3} \ {\rm d}r + 
		2\:\omega_N(N - 3) \sum_{n= 0}^{\infty} \lambda_{n} \int_{0}^{\infty} a_{n}^2  \psi^{N-3} {\rm d}r \notag \\ &+ 2(N-4)\:\omega_N   \sum_{n = 0}^{\infty} \lambda_{n} \int_{0}^{\infty} \frac{a_{n}^2}{\psi^2} \psi^{N-3} \ {\rm d}r \notag \\
		& =\omega_N  \frac{(N-3)(N + 1)}{2} \sum_{n= 0}^{\infty} \lambda_{n} \int_{0}^{\infty} \frac{a_{n}^2}{\psi^2} \psi^{N-1} \ {\rm d}r 
		+ \frac{\omega_N}{4} \sum_{n = 0}^{\infty} \lambda_{n} \int_{0}^{\infty} \frac{a_{n}^2}{r^2 \psi^2} \psi^{N-1} \ {\rm d}r \notag \\
		& +  \omega_N  \frac{(2N^2-8N-1)}{4} \sum_{n = 0}^{\infty} \lambda_{n} \int_{0}^{\infty} \frac{a_{n}^2}{\psi^4} \psi^{N-1} \ {\rm d}r.
	\end{align}
	
	Therefore, taking into account \eqref{Th-2.4_sub_1}, \eqref{Th-2.4_sub_2}, \eqref{Th-2.4_sub_4} and using the fact that $\lambda_n\geq(N-1)=\lambda_{1}$ into  \eqref{Th-2.4_lhs}, we infer that
	
	\begin{align*}
		& \int_{M} (\Delta_{g} u)^2 \ {\rm d}v_g  -\left( \frac{N-1}{2} \right)^{2} \int_{M} |\nabla_{g} u|^2 \ {\rm d}v_g \geq
		 \frac{\omega_{N}}{4} \sum_{n = 0}^{\infty} \int_{0}^{\infty} \frac{(a_{n}^{\prime})^2}{r^2} \psi^{N - 1} \ {\rm d}r 
		 \\ &  + \frac{\omega_{N}}{4} \sum_{n = 0}^{\infty}  \lambda_{n} \int_{0}^{\infty} \frac{a_{n}^2}{r^2 \psi^2} \psi^{N - 1} \ {\rm d}r + \omega_N\frac{(N^2 - 1 )}{4} \sum_{n = 0}^{\infty} \int_{0}^{\infty} \frac{(a_{n}^{\prime})^2}{\psi^2} \psi^{N - 1} \ {\rm d}r \notag  \\
		& +  \omega_N \frac{(2N^2 -4N -5)}{4} \sum_{n = 0}^{\infty}  \lambda_{n} \int_{0}^{\infty} \frac{a_{n}^2}{\psi^4} \psi^{N - 1} \ {\rm d}r \\
		& + \omega_N \bigg[\frac{{(N-3)(N+1)}}{2}-\left( \frac{N-1}{2} \right)^{2}\bigg] \sum_{n = 0}^{\infty}  \lambda_{n} \int_{0}^{\infty} \frac{a_{n}^2}{\psi^2} \psi^{N - 1} \ {\rm d}r.
	\end{align*}

	Hence, by noticing that $\bigg[\frac{{(N-3)(N+1)}}{2}-\left( \frac{N-1}{2} \right)^{2}\bigg]\geq 0$ and $\frac{(2N^2 -4N -5)}{4} \geq \frac{(N^2-1)}{4}$ for $N\geq 5$, and by combining the above inequality with \eqref{Th-2.4_rhs}, we conclude that
	\begin{align*}
		\int_{M} (\Delta_{g} u)^2 \ {\rm d}v_g  -  \left( \frac{N-1}{2} \right)^{2} \int_{M} |\nabla_{g} u|^2 \ {\rm d}v_g 
		\geq \frac{1}{4} \int_{M} \frac{|\nabla_{g} u|^2}{r^2} \ {\rm d}v_g   + \frac{(N^2- 1)}{4}\int_{M} \frac{|\nabla_{g}u|^2}{\psi^2} \ {\rm d}v_g\,,
	\end{align*}
	which is the thesis. $\qed$
	\medskip
	
	\medskip

   \par\bigskip\noindent
\textbf{Acknowledgments.}  The authors are grateful to the anonymous referee whose suggestions helped in improving the preliminary form of the manuscript. The first author is member of the Gruppo Nazionale per l'Analisi Matematica, la Probabilit\`a e le loro Applicazioni (GNAMPA) of the Istituto Nazionale di Alta Matematica (INdAM) and she is partially supported by the PRIN project ``Direct and inverse problems for partial differential equations: theoretical aspects and applications'' (Italy). 
The second author is partially supported by INSPIRE faculty fellowship (IFA17-MA98).
The third author is supported by Council of Scientific \& Industrial Research (File no.  09/936(0182)/2017-EMR-I) and by the PhD program at Indian Institute of Science Education and Research, Pune.


   \end{document}